\newtheorem{thm}{Theorem}[section]
\newtheorem{pro}[thm]{Proposition}
\newtheorem{lem}[thm]{Lemma}
\newtheorem{cor}[thm]{Corollary}
\newtheorem*{thmA}{Theorem A}
\newtheorem*{thmB}{Theorem B}
\newtheorem*{thmC}{Theorem C}
\newtheorem*{conj}{Conjecture}
\date{}
\title{\normalsize\bf ON GROUPS ADMITTING A WORD\\ WHOSE VALUES ARE ENGEL}
\author{\small{\textsc{Raimundo Bastos} and \textsc{Pavel Shumyatsky}\footnote{This research was supported by CNPq-Brazil.}}\\
\small{Department of Mathematics, University of Brasilia}\\
\small{Brasilia-DF, 70910-900 Brazil}\\
\small{E-mail: bastos@mat.unb.br, pavel@unb.br}\\
[10pt]
\small{\textsc{Antonio Tortora}\footnote{This paper was written while these authors were visiting
the Department of Mathematics of University of Brasilia. They wish to thank the department
for its hospitality and G.N.S.A.G.A. (INdAM) for financial support.} \;and \textsc{Maria Tota}\footnotemark[2]}\\
\small{Dipartimento di Matematica, Universit\`a di Salerno}\\
\small{Via Ponte don Melillo, 84084 - Fisciano (SA), Italy}\\
\small{E-mail: antortora@unisa.it, mtota@unisa.it}}
\begin{document}
\maketitle

\begin{abstract} Let $m,n$ be positive integers, $v$ a multilinear commutator word and $w=v^m$. We prove that if $G$ is a residually finite group in which all $w$-values are $n$-Engel, then the verbal subgroup $w(G)$ is locally nilpotent. We also examine the question whether this is true in the case where $G$ is locally graded rather than residually finite. We answer the question affirmatively in the case where $m=1$. Moreover, we show that if $u$ is a non-commutator word and $G$ is a locally graded group in which all $u$-values are $n$-Engel, then the verbal subgroup $u(G)$ is locally nilpotent.\\

\noindent{\bf 2010 Mathematics Subject Classification:} 20F45, 20E26, 20F40\\
{\bf Keywords:} Engel elements, residually finite groups
\end{abstract}

\section{Introduction}

Let $n$ be a positive integer and let $x,y$ be elements of a group $G$. The commutators $[x,_n y]$ are defined inductively by the rule
$$[x,_0 y]=x;\quad [x,_n y]=[[x,_{n-1} y],y].$$
An element $x$ is called a (left) Engel element if, for any $g\in G$, there exists $n=n(x,g)\geq 1$ such that $[g,_n x]=1$. If $n$ can be chosen independently of $g$, then $x$ is a (left) $n$-Engel element. A group $G$ is called $n$-Engel if all elements of $G$ are $n$-Engel. It is a long-standing problem whether any $n$-Engel group is locally nilpotent. Following Zelmanov's solution of the restricted Burnside problem \cite{ze1,ze2}, Wilson proved that this is true if $G$ is residually finite \cite{w}. Later the second author showed that if in a residually finite group $G$ all commutators $[x_1,\ldots,x_k]$ are $n$-Engel, then the subgroup $\langle[x_1,\ldots,x_k]\,|\,x_i\in G\rangle$ is locally nilpotent \cite{shu1,shu2}. This suggests the following conjecture.

\begin{conj}\label{verb}
Let $w$ be a group-word and $n$ a positive integer. Assume that $G$ is a residually finite group in which all $w$-values are $n$-Engel. Then the corresponding verbal subgroup $w(G)$ is locally nilpotent.
\end{conj}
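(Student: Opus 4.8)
The plan is to run the Lie-theoretic argument of Wilson and Zelmanov that underlies every known instance of the conjecture, and to locate the single point at which the generality of $w$ causes it to break down. Since local nilpotency is a local property and every finitely generated subgroup of $w(G)$ is contained in a subgroup of the form $K=\langle g_1,\dots,g_r\rangle$ with each $g_i$ a $w$-value, it suffices to prove that each such $K$ is nilpotent. Being a subgroup of $G$, $K$ is residually finite, and in any finite quotient $\overline K$ the images of the $g_i$ are again $n$-Engel $w$-values; by Baer's theorem they lie in the Fitting subgroup, so $\overline K$ is nilpotent. Thus $K$ is residually nilpotent, and the whole problem collapses to finding a bound on the nilpotency class of $\overline K$ that does not depend on the chosen finite quotient. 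Because a finite nilpotent group is the direct product of its Sylow subgroups, it is enough to bound the class of the finite $p$-quotients of $K$, uniformly in $p$.

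For a fixed prime $p$ I would form the associated Lie ring $L$ of a finite $p$-quotient with respect to its Zassenhaus filtration, regarded as a Lie algebra over $\mathbb{F}_p$. By the standard correspondence, the $n$-Engel condition on $g_1,\dots,g_r$ forces their images in $L$ to be ad-nilpotent of degree bounded in terms of $n$; moreover, because the set of $w$-values is closed under conjugation, these ad-nilpotent images generate an ideal of $L$. If one knew that $L$ satisfied a polynomial identity, then Zelmanov's theorem on Lie algebras generated by ad-nilpotent elements would yield that this ideal is nilpotent of class bounded in terms of $n$, $r$ and the identity, delivering exactly the uniform bound required and closing the argument.

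At the outset one can dispose of part of the conjecture for free. If some variable of $w$ has non-zero exponent sum, then $w$ is a non-commutator word and the conclusion already follows from the locally graded result of this paper, every residually finite group being locally graded. I may therefore assume that $w$ is a commutator word, that is $w\in F'$. Within this class the multilinear commutator words and their powers are handled by Theorem~A, where the rigid bracket structure of the values is precisely what allows one to read off the Lie generators and to force the needed polynomial identity on $L$.

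The hard part will be to manufacture the polynomial identity for $L$ when $w$ is an arbitrary commutator word, and I expect a complete proof to stand or fall on this point. For a general element of $F'$, for instance a product $[x_1,x_2][x_3,x_4]$, the $w$-values form merely a conjugation-closed generating set of $w(G)$ with no multilinearity or collection structure; there is no visible reason why commutators of $w$-values should be $n$-Engel, nor any mechanism producing a law on the associated Lie ring. That this is a genuine obstruction, and not a technical inconvenience, is shown by Golod's finitely generated Engel Lie algebras that are not nilpotent: ad-nilpotency of the generators can never by itself substitute for the polynomial identity in Zelmanov's theorem. Finding, for each commutator word, a structural reason why $L$ must be PI is therefore the essential difficulty separating the general conjecture from the cases resolved here.
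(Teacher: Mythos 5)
You were asked to prove a statement that the paper itself labels a \emph{Conjecture}: the paper does not prove it, and it remains open. The paper only settles special cases --- $w=v^m$ with $v$ a multilinear commutator (Theorem A), and the locally graded analogues for multilinear commutators and non-commutator words (Theorems B and C). Your proposal, rightly, does not claim a proof either; it runs exactly the machinery of the paper's Proposition \ref{residual} (residual nilpotence from the fact that finite groups generated by Engel elements are nilpotent; reduction to $p$-quotients; the Lie algebra of the Zassenhaus--Jennings--Lazard series; Lazard's lemma converting the $n$-Engel condition on values into ad-nilpotency; Zelmanov's Theorem \ref{Zelmanov}) and locates the two genuine obstructions for arbitrary $w\in F'$: commutators of $w$-values need not be $w$-values, so one cannot certify that all Lie commutators in the generators are ad-nilpotent (for multilinear commutators this is exactly what Lemma \ref{delta} and the $\delta_k$-structure supply), and there is no group law analogous to $[y,_n\,\delta_k^m(x_1,\dots,x_{2^k})]\equiv 1$ to feed into the Wilson--Zelmanov criterion (Lemma \ref{WZ}) and make $L$ PI. Your reduction of the non-commutator case to Theorem C is also sound, since every residually finite group is locally graded. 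So the ``gap'' in your proposal is the conjecture itself, and you have identified it accurately rather than papered over it; this matches the state of the art reflected in the paper.

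A few corrections to the surrounding scaffolding. First, your claim that the ad-nilpotent images of $g_1,\dots,g_r$ ``generate an ideal of $L$'' because $w$-values are conjugation-closed is unjustified (conjugacy in $G$ does not translate into ideal closure in the graded Lie algebra) and also beside the point: the paper applies Zelmanov's theorem to the \emph{subalgebra} $L_p(H)$ generated by the images, and what that theorem needs is ad-nilpotency of all commutators in the generators --- precisely the missing ingredient. Second, you do not need a bound on nilpotency class uniform over finite quotients, nor uniform in $p$: by the paper's Lemma \ref{Wilson}, nilpotency of each quotient $G/R_p$ suffices, and the paper obtains even that not by bounding classes of finite quotients but by proving $L_p(H)$ nilpotent, passing to the pro-$p$ completion, and invoking Lazard's analyticity criterion (Theorem \ref{lazt}), Tits' alternative, and Gruenberg's theorem via Proposition \ref{asc}. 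Third, the reference to ``Golod's finitely generated Engel Lie algebras that are not nilpotent'' is correct only in the unbounded sense of Engel (every element ad-nilpotent, with no uniform index): the $n$-Engel condition for a fixed $n$ is itself a polynomial identity, so by Zelmanov's theorem no finitely generated counterexample with bounded indices can exist; Golod's examples have unbounded ad-nilpotency indices, and it is exactly this distinction that makes your concluding point --- ad-nilpotency of generators cannot substitute for PI --- valid.
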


Recall that if $w$ is a group-word and $G$ is a group, then the verbal subgroup $w(G)$ of $G$ corresponding to the word $w$ is the subgroup generated by all $w$-values in $G$. Most of the words considered in this paper are {\it multilinear commutators}, also known under the name of {\it outer commutator words}. These are words that have a form of a multilinear Lie monomial, i.e., they are constructed by nesting commutators but using always different variables. For example the word
$$[[x_1,x_2],[y_1,y_2,y_5],z]$$
is a multilinear commutator  while the Engel word
$$[x,y,y,y]$$
is not.

An important family of multilinear commutators consists of the lower central words $\gamma_k$,
given by
\[
\gamma_1=x_1,
\qquad
\gamma_k=[\gamma_{k-1},x_k]=[x_1,\ldots,x_k],
\quad
\text{for $k\ge 2$.}
\]
The corresponding verbal subgroups $\gamma_k(G)$ are the terms of the lower central series of $G$.  Another distinguished sequence of outer commutator words are the derived words $\delta_k$, on $2^k$ variables, which are defined recursively by
\[
\delta_0=x_1,
\quad
\delta_k=[\delta_{k-1}(x_1,\ldots,x_{2^{k-1}}),\delta_{k-1}(x_{2^{k-1}+1},\ldots,x_{2^k})].
\]
The verbal subgroup that corresponds to the word $\delta_k$ is the familiar $k$th derived subgroup of $G$ usually denoted by $G^{(k)}$.

In the present paper we will prove the following theorem.

\begin{thmA}
Let $m,n$ be positive integers, $v$ a multilinear commutator word and $w=v^m$. If $G$ is a residually finite group in which all $w$-values are $n$-Engel, then the verbal subgroup $w(G)$ is locally nilpotent.
\end{thmA}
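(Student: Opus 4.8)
The plan is to follow the Lie-theoretic strategy underlying Wilson's theorem \cite{w} and the second author's work on the lower central and derived words \cite{shu1,shu2}, pushing it through for an arbitrary multilinear commutator word $v$ (this handles the case $m=1$) and then inserting an extra ingredient to absorb the exponent $m$. First I would reduce to the case where $G$ is finitely generated: local nilpotency is a property of finitely generated subgroups, and any finite subset of $w(G)$ involves only finitely many $w$-values, each of which is a function of finitely many elements of $G$; hence every finitely generated subgroup of $w(G)$ is contained in $w(H)$ for some finitely generated $H\le G$, and it is enough to know that $w(H)$ is locally nilpotent. I would also record two standing facts: the set of $w$-values is closed under conjugation, so $w(G)$ is generated by a normal set of $n$-Engel elements; and, by Baer's theorem, in every finite quotient the $w$-values lie in the Fitting subgroup, so the image of $w(G)$ is nilpotent and $w(G)$ is residually nilpotent.

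The heart of the argument is local, one prime at a time. Fixing a prime $p$ and a suitable residually-$p$ section, I would form the associated restricted Lie algebra $L$ over $\mathbb{F}_p$ from the Zassenhaus--Jennings--Lazard filtration. In this setting the $n$-Engel condition on a group element passes to ad-nilpotency of the associated homogeneous Lie element, and the multilinearity of $v$ forces the $v$-values to map to values of a fixed multilinear Lie polynomial, so that the subalgebra they generate satisfies a polynomial identity. The new point is the behaviour of the power: a $v$-value $g$ sits in a fixed term of the filtration, giving a Lie element $\tilde g$, and when $p\nmid m$ the $w$-value $g^{m}$ maps to the nonzero multiple $m\tilde g$; since $\mathrm{ad}(m\tilde g)=m\,\mathrm{ad}(\tilde g)$ and $m$ is invertible modulo $p$, ad-nilpotency of $g^{m}$ forces ad-nilpotency of $\tilde g$. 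Writing $m=p^{a}m'$ with $p\nmid m'$, for the primes $p\mid m$ I would instead exploit the restricted structure: the $p$-power map on the group induces the $[p]$-operation on $L$, so $g^{p^{a}}$ corresponds to the restricted power $\tilde g^{[p^{a}]}$ and $g^{m}=(g^{p^{a}})^{m'}$ to the nonzero multiple $m'\,\tilde g^{[p^{a}]}$, whose ad-nilpotency again follows. In either case Zelmanov's theorem \cite{ze1,ze2} --- a Lie algebra generated by finitely many ad-nilpotent elements and satisfying a polynomial identity is nilpotent --- applies to the subalgebra generated by the Lie elements coming from $w$-values, yielding its nilpotency, and hence, through the Lazard correspondence and the residually nilpotent structure noted above, the nilpotency of the finitely generated sections in question.

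I expect the primes $p$ dividing $m$ to be the real obstacle. For such $p$ the naive image $m\tilde g$ of a $w$-value is zero in characteristic $p$, so the Engel information is simply lost unless one works inside the restricted Lie algebra and tracks the $p$-power operation with care. Making the correspondence $g^{p^{a}}\leftrightarrow\tilde g^{[p^{a}]}$ precise, and verifying that the restricted subalgebra generated by the elements $\tilde g^{[p^{a}]}$ still inherits both ad-nilpotency of its generators and a polynomial identity from the multilinearity of $v$, is the delicate step on which the whole reduction from $v^{m}$ to $v$ rests; once it is in place, the assembly over all primes and the passage back from the Lie algebra to the group proceed as in the multilinear case \cite{shu1,shu2,w}.
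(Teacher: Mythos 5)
Your proposal follows the right general strategy (Zassenhaus--Jennings--Lazard filtration, ad-nilpotency from the Engel condition, a PI, Zelmanov), and your handling of the primes dividing $m$ via $m=p^am'$ and the $p$-power operation is essentially the paper's Lazard step (indeed stated more carefully than in the paper). But there is a fatal gap at the Zelmanov step. The theorem you quote --- ``a Lie algebra generated by finitely many ad-nilpotent elements and satisfying a polynomial identity is nilpotent'' --- is false: the hypothesis of Zelmanov's theorem (Theorem~\ref{Zelmanov}) is that \emph{all commutators in the generators} are ad-nilpotent, not merely the generators. For a counterexample to your version, take the Witt algebra $W_1=\mathrm{Der}\bigl(\mathbb{F}_p[x]/(x^p)\bigr)$ with basis $e_{-1},\dots,e_{p-2}$, $[e_i,e_j]=(j-i)e_{i+j}$: it is generated by the two ad-nilpotent elements $e_{-1}$ and $e_{p-2}$, it is PI (being finite-dimensional), yet it is simple. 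This distinction is exactly where your setup breaks down: your Lie generators are images of $w$-values, and since $w=v^m$ is not commutator-closed, a group commutator of $w$-values need not be a $w$-value, so you have no Engel condition --- hence no ad-nilpotency --- for the Lie commutators of your generators. The paper's key idea, missing from your proposal, is Lemma~\ref{delta}: pass to $\delta_k$-values, where $k$ is the weight of $v$. Every $\delta_k$-value is a $v$-value, and the set of $\delta_k$-values is closed under taking commutators; consequently, for a subgroup generated by finitely many $\delta_k$-values $h_1,\dots,h_d$, \emph{every} group commutator $h$ in the $h_i$ is again a $v$-value, so $h^m$ is a $w$-value and hence $n$-Engel, and the Lazard argument then makes every Lie commutator in the generators ad-nilpotent of index at most $mn$. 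Only then does Theorem~\ref{Zelmanov} apply.

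Two further problems. First, your source of the PI is not valid: being generated by values of a fixed multilinear Lie polynomial imposes no identity on the subalgebra (the derived subalgebra of a free Lie algebra is generated by commutator values, yet is itself free of infinite rank). The correct argument is that the group satisfies the law $[y,\,_n\,\delta_k(x_1,\dots,x_{2^k})^m]\equiv 1$, whence $L$ is PI by the Wilson--Zelmanov theorem (Lemma~\ref{WZ}). Second, the endgame is glossed over: nilpotency of $L_p(H)$ does not directly yield nilpotency of $H$, because $H$ is not an Engel group --- only its $w$-values are Engel. The paper must pass through Lazard's theorem (Theorem~\ref{lazt}) to get that the pro-$p$ completion is $p$-adic analytic, hence $H$ is linear over the $p$-adic numbers; then Tits' alternative (free subgroups of rank $2$ are excluded since free groups have no nontrivial Engel elements) gives that $H$ is soluble, so that $G^{(k)}$ is locally soluble; and only then does the group-theoretic Proposition~\ref{asc} (Engel elements coincide with the Hirsch--Plotkin radical in groups with an ascending normal series with locally soluble factors) convert ``generated by finitely many Engel elements'' into nilpotency. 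These last steps could be repaired by following the cited literature, but as written the core reduction of your proposal fails at the Zelmanov step, and repairing it requires precisely the $\delta_k$-value device you omitted.
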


Similarly to the aforementioned results \cite{w,shu1,shu2} the proof of the above theorem is based on the techniques that Zelmanov created in his solution of the restricted Burnside problem. In \cite{kr}, Kim and Rhemtulla extended Wilson's theorem by showing that any locally graded $n$-Engel group is locally nilpotent. Recall that a group is locally graded if every non-trivial finitely generated subgroup has a proper subgroup of finite index. The class of locally graded groups is fairly large and in particular it contains all residually finite groups. We examine the question whether Theorem A can be extended to the case where $G$ is locally graded rather than residually finite. We answer the question affirmatively in the case where $w$ is a multilinear commutator.

\begin{thmB}
Let $n$ be a positive integer and $w$ a multilinear commutator word. If $G$ is a locally graded group in which all $w$-values are $n$-Engel, then the verbal subgroup $w(G)$ is locally nilpotent.
\end{thmB}

Recall that a non-commutator word $u$ is a word such that the sum of the exponents of some variable involved in it is non-zero. Our next result is about locally graded groups in which all values of a non-commutator word are $n$-Engel.

\begin{thmC}
Let $n$ be a positive integer and $u$ a non-commutator word. If $G$ is a locally graded group in which all $u$-values are $n$-Engel, then the verbal subgroup $u(G)$ is locally nilpotent.
\end{thmC}

In the next section we collect results of general nature that are later used in the proofs of our main theorems. In particular we show that if $G$ is a group having an ascending normal series with locally soluble quotients, then the set of all Engel elements of $G$ coincides with the Hirsch-Plotkin radical of $G$. In Section 3 we describe some important ingredients of what is often called ``Lie methods in group theory''. These are crucial in the proof of Theorem A. Section 4 contains the proofs of the main results.

\section{Preliminary results}

Given subgroups $X$ and $Y$ of a group $G$, we denote by $X^Y$ the smallest subgroup of $G$ containing $X$ and normalized by $Y$.

\begin{lem}\label{H^y}
Let $x$ and $y$ be elements of a group $G$ satisfying $[x,_n y^m]=1$, for some $n,m\geq 1$. Then $\langle x\rangle^{\langle y\rangle}$ is finitely generated.
\end{lem}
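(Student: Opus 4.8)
The plan is to first treat the case $m=1$, where the hypothesis reads $[x,_n y]=1$, and then deduce the general case from the fact that $\langle y^m\rangle$ has finite index in $\langle y\rangle$.

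For the case $m=1$, set $c_k=[x,_k y]$ for $k\ge 0$, so that $c_0=x$ and, by hypothesis, $c_n=1$ (whence $c_k=1$ for all $k\ge n$). First I would record the collection identity $c_k^{\,y}=c_k\,c_{k+1}$, which is immediate from $[a,y]=a^{-1}a^y$. This shows that conjugation by $y$ sends each of the $n$ elements $c_0,\dots,c_{n-1}$ into $K:=\langle c_0,\dots,c_{n-1}\rangle$; solving the same identities for $c_k^{\,y^{-1}}$ by downward induction (starting from $c_{n-1}^{\,y}=c_{n-1}$, since $c_n=1$) shows that conjugation by $y^{-1}$ does the same. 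Hence $K$ is $\langle y\rangle$-invariant and contains $x$, so $\langle x\rangle^{\langle y\rangle}\le K$; the reverse inclusion is clear because each $c_k$ lies in the normal closure $\langle x\rangle^{\langle y\rangle}$. Therefore $\langle x\rangle^{\langle y\rangle}=K$ is generated by $n$ elements.

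For general $m$, put $z=y^m$; applying the case just proved (with $z$ in place of $y$) gives that $H_0:=\langle x\rangle^{\langle z\rangle}$ is finitely generated. Now $\langle z\rangle=\langle y^m\rangle$ has index at most $m$ in $\langle y\rangle$, with coset representatives $1,y,\dots,y^{m-1}$. Writing an arbitrary exponent as $j=qm+r$ with $0\le r\le m-1$, and using that $y^r$ commutes with $z$, I get $x^{y^j}=(x^{z^q})^{y^r}\in H_0^{\,y^r}$. Consequently $\langle x\rangle^{\langle y\rangle}=\langle H_0^{\,y^r}\mid 0\le r\le m-1\rangle$ is generated by the finitely many elements $\{[x,_k z]^{y^r}\mid 0\le k\le n-1,\ 0\le r\le m-1\}$, and is therefore finitely generated.

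The only genuinely delicate point is the passage from $m=1$ to general $m$: one must ensure that the finitely many conjugates $H_0^{\,y^r}$ really do account for every generator $x^{y^j}$ of $\langle x\rangle^{\langle y\rangle}$, which rests precisely on the commutativity of the powers $y^r$ and $y^m$. Everything else is the standard commutator collection, and no appeal to the Engel--local-nilpotency machinery is needed.
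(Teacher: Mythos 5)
Your proof is correct and follows the same two-step decomposition as the paper: first establish that $\langle x\rangle^{\langle y^m\rangle}$ is finitely generated using the Engel condition on $y^m$, then pass to $\langle y\rangle$ via the coset representatives $1,y,\dots,y^{m-1}$, exactly as the paper does with $\langle x\rangle^{\langle y\rangle}=\langle X^{y^i}\mid i=0,\dots,m-1\rangle$. The only difference is that where the paper cites Robinson's Exercise 12.3.6 for the first step, you prove it directly via the collection identity $c_k^{\,y}=c_k c_{k+1}$ and downward induction for $y^{-1}$-invariance, which makes the argument self-contained but is not a different route.
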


\begin{proof}
Set $X=\langle x\rangle^{\langle y^m\rangle}$. Then $X$ is finitely generated by \cite[Exercise 12.3.6]{Rob}. Since ${\langle x\rangle}^{\langle y\rangle}=\langle X^{y^i}\ |\ i=0,\dots,m-1\rangle$, the lemma follows.
\end{proof}

\begin{cor}\label{H^y}
Let $y$ be an element of a group $G$ and $H$ a finitely generated subgroup. If $y^m$ is Engel for some $m\geq 1$, then $H^{\langle y\rangle}$ is finitely generated.
\end{cor}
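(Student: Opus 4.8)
The plan is to reduce the general finitely generated case to the cyclic case already settled by the previous lemma. Write $H=\langle h_1,\dots,h_k\rangle$. The first step is to observe that $H^{\langle y\rangle}$ coincides with the join of the cyclic closures $\langle h_i\rangle^{\langle y\rangle}$, $i=1,\dots,k$. Indeed, set $K=\langle\,\langle h_i\rangle^{\langle y\rangle}\mid i=1,\dots,k\,\rangle$. Then $K$ contains every $h_i$, hence contains $H$, and each factor $\langle h_i\rangle^{\langle y\rangle}$ is by construction normalized by $\langle y\rangle$, so $K$ itself is normalized by $\langle y\rangle$; this yields $H^{\langle y\rangle}\leq K$. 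The reverse inclusion is immediate, since each $h_i$ lies in the $\langle y\rangle$-invariant subgroup $H^{\langle y\rangle}$, whence $\langle h_i\rangle^{\langle y\rangle}\leq H^{\langle y\rangle}$ for every $i$. Thus $H^{\langle y\rangle}=K$.

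The second step is to invoke the Engel hypothesis. Since $y^m$ is an Engel element, for each generator $h_i$ there exists $n_i\geq 1$ with $[h_i,_{n_i}y^m]=1$. Applying the previous lemma to the pair $h_i,y$ (with $n=n_i$), we conclude that each $\langle h_i\rangle^{\langle y\rangle}$ is finitely generated.

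Finally, combining the two steps, $H^{\langle y\rangle}$ is the join of the finitely many finitely generated subgroups $\langle h_1\rangle^{\langle y\rangle},\dots,\langle h_k\rangle^{\langle y\rangle}$; taking the union of finite generating sets of these factors produces a finite generating set for the join. Hence $H^{\langle y\rangle}$ is finitely generated, as required.

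I do not anticipate a serious obstacle here. The only point that requires genuine care is the first step, namely the identification of $H^{\langle y\rangle}$ with the join of the cyclic $\langle y\rangle$-closures, since it is precisely this reduction that allows the previous lemma to supply the finite generation of each piece. The Engel condition enters only to guarantee the hypothesis $[h_i,_{n_i}y^m]=1$ for each generator, and the concluding finiteness argument is entirely routine.
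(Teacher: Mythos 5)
Your proof is correct and follows exactly the argument the paper leaves implicit: the corollary is stated without proof because it reduces to Lemma \ref{H^y} by writing $H^{\langle y\rangle}$ as the join of the cyclic closures $\langle h_i\rangle^{\langle y\rangle}$ of a finite generating set, each of which is finitely generated by that lemma since the Engel condition supplies $[h_i,_{n_i}y^m]=1$. Nothing further to add.
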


The following lemma is well-known. We supply the proof for the reader's convenience.

\begin{lem}\label{comm}
If $G$ is a group generated by two elements $x$ and $y$, then $G'=\langle [x,y]^{x^ry^s}\,|\,r,s\in\mathbb{Z}\rangle$.
\end{lem}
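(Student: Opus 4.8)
The plan is to set $t=[x,y]$ and
$$N=\langle t^{x^ry^s}\mid r,s\in\mathbb Z\rangle,$$
and to prove $N=G'$ by showing that $N$ is a normal subgroup of $G$ with abelian quotient. One inclusion is immediate: each generator $t^{x^ry^s}$ of $N$ is a commutator, so $N\leq G'$. For the reverse inclusion I would verify that $N\trianglelefteq G$ and that $G/N$ is abelian. Since $t=[x,y]\in N$, the images of $x$ and $y$ commute modulo $N$, so $G/N$ is generated by two commuting elements and is therefore abelian, forcing $G'\leq N$. Thus the whole argument reduces to proving that $N$ is normal.

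As $G=\langle x,y\rangle$, it suffices to check that each of $x^{\pm1}$ and $y^{\pm1}$ normalizes $N$. Conjugation by $y$ is harmless: since $(t^{x^ry^s})^{y}=t^{x^ry^{s+1}}$, and likewise $(t^{x^ry^s})^{y^{-1}}=t^{x^ry^{s-1}}$, the cyclic group $\langle y\rangle$ merely shifts the index $s$ and hence normalizes $N$.

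The main obstacle is conjugation by $x$, which does not simply permute the generators of $N$. Using the identity $a^b=a[a,b]$ one has $y^sx=xy^s[y^s,x]$, whence $x^ry^sx=x^{r+1}y^s[y^s,x]$ and therefore
$$\bigl(t^{x^ry^s}\bigr)^{x}=t^{x^{r+1}y^s[y^s,x]}=\bigl(t^{x^{r+1}y^s}\bigr)^{[y^s,x]}.$$
The right-hand side is a conjugate of the generator $t^{x^{r+1}y^s}\in N$ by the element $[y^s,x]$, so it lies in $N$ as soon as $[y^s,x]\in N$. To establish this last point I would expand $[y^s,x]=[y^{s-1},x]^y[y,x]$ and induct on $s$ (handling $s<0$ by the analogous downward step), using that $[y,x]=t^{-1}\in N$ together with the invariance of $N$ under $\langle y\rangle$ already proved above. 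Running the same computation with $x$ replaced by $x^{-1}$ (now producing the factor $[y^s,x^{-1}]$, again shown to lie in $N$) yields that $\langle x\rangle$ normalizes $N$ as well.

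Once normality is in hand the proof concludes as indicated: $N\trianglelefteq G$ with $G/N$ abelian gives $G'\leq N$, and together with $N\leq G'$ this yields $G'=N$, as required. The only genuinely non-bookkeeping step is the verification that the ``correction factors'' $[y^{\pm s},x^{\pm1}]$ are swallowed by $N$; everything else is formal manipulation of the defining generators.
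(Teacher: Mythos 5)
Your proposal is correct and follows essentially the same route as the paper: both prove normality of $N$ by noting that $y^{\pm 1}$ merely shifts the generators, while conjugation by $x^{\pm 1}$ introduces a correction factor $[y^s,x^{\pm 1}]$ that is absorbed into $N$ --- your induction via $[y^s,x]=[y^{s-1},x]^y[y,x]$ is just the unrolled form of the paper's explicit product $[y^s,x]=[y,x]^{y^{s-1}}[y,x]^{y^{s-2}}\cdots[y,x]$. The only difference is that you spell out the final step ($[x,y]\in N$ forces $G/N$ abelian, hence $G'\le N$), which the paper leaves implicit.
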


\begin{proof}
Let $N=\langle [x,y]^{x^ry^s}\,|\,r,s\in\mathbb{Z} \rangle$. Of course, $N^y$ and $N^{y^{-1}}$ are both contained in $N$. Moreover,
$$[x,y]^{x^r y^s x}=[x,y]^{x^{r+1}y^s [y^s,x]}=[y^s,x]^{-1}[x,y]^{x^{r+1}y^s}[y^s,x].$$
We have $[y^s,x]=[y,x]^{y^{s-1}}[y,x]^{y^{s-2}}\cdots[y,x]$, for all $s\geq 1$. This implies that $N^x\leq N$. Similarly we get $N^{x^{-1}}\leq N$ and so $N$ is normal in $G$. It follows that $G'=N$, as desired.
\end{proof}

\begin{lem}\label{BM}
Let $m\geq 1$ and $G$ be a group generated by a finite subset $X$. If $x^m$ is an Engel element for all $x\in X$, then $G'$ is finitely generated.
\end{lem}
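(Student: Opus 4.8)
The plan is to reduce the problem to finitely many applications of Corollary~\ref{H^y}. Write $X=\{x_1,\dots,x_d\}$ and set $c_{ij}=[x_i,x_j]$ for $i<j$. Since $G/G'$ is abelian and generated by the images of the $x_i$, the derived subgroup $G'$ is the normal closure in $G$ of the finite set $\{c_{ij}\mid i<j\}$. The first, and decisive, step is to upgrade this to a $d$-generator analogue of Lemma~\ref{comm}, namely the identity
$$G'=\big\langle\, c_{ij}^{\,x_1^{a_1}x_2^{a_2}\cdots x_d^{a_d}}\ \big|\ i<j,\ a_1,\dots,a_d\in\mathbb{Z}\,\big\rangle,$$
which asserts that it already suffices to conjugate the $c_{ij}$ by the ordered monomials $x_1^{a_1}\cdots x_d^{a_d}$ (a set of coset representatives for $G'$ in $G$) rather than by all of $G$.

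Granting this identity, the lemma follows quickly. I would introduce the ascending chain of subgroups defined by $R_0=\langle c_{ij}\mid i<j\rangle$ and $R_k=R_{k-1}^{\langle x_k\rangle}$ for $1\le k\le d$. The group $R_0$ is finitely generated, and since conjugation distributes over products one checks inductively that $R_k=\langle c_{ij}^{\,x_1^{a_1}\cdots x_k^{a_k}}\mid i<j,\ a_1,\dots,a_k\in\mathbb{Z}\rangle$. Because each $x_k^m$ is an Engel element and $R_{k-1}$ is finitely generated, Corollary~\ref{H^y} shows that $R_k=R_{k-1}^{\langle x_k\rangle}$ is again finitely generated; after $d$ steps $R_d$ is finitely generated. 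The displayed identity says precisely that $R_d=G'$, and hence $G'$ is finitely generated.

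The main obstacle is therefore the collection identity itself, that is, showing that the subgroup $N$ on the right-hand side is normal in $G$. It is visibly contained in $G'$, and once normality is known, $G/N$ is abelian (the images of the generators commute modulo $N$), whence $G'\le N$ and $N=G'$. Establishing $N^{x_k}\le N$ is exactly the kind of computation carried out in the proof of Lemma~\ref{comm}: one conjugates a generator $c_{ij}^{\,x_1^{a_1}\cdots x_d^{a_d}}$ by $x_k$ and collects the trailing $x_k$ back into normal form, the price being a product of commutators $[x_l,x_k]$ conjugated by shorter monomials, all of which lie in $N$ by a straightforward induction on the total degree $\sum_l |a_l|$. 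I expect this bookkeeping, rather than the finiteness argument, to be the technical heart of the proof; it can either be done directly as above or deduced from the two-generator case of Lemma~\ref{comm} by induction on $d$.
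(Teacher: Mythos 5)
Your proof is correct, but it follows a genuinely different route from the paper's. The paper argues by induction on the number of generators: the base case $|X|=2$ is exactly Lemma~\ref{comm} together with two applications of Corollary~\ref{H^y}, and for $X=\{x_1,\dots,x_{d+1}\}$ with $d\ge 2$ it forms the subgroups $G_i=\langle X\setminus\{x_i\}\rangle$, notes that each $(G_i')^{\langle x_i\rangle}$ is finitely generated (inductive hypothesis plus Corollary~\ref{H^y}), and checks that $K=\langle (G_i')^{\langle x_i\rangle}\mid i=1,\dots,d+1\rangle$ is normal in $G$, whence $K=G'$. That normality check is painless precisely because any correction commutator $[x_i^a,x_j]$ produced by conjugation lies in $G_l'$ for a third index $l\notin\{i,j\}$; this is where having at least three generators is used, and why the two-generator case must be handled separately. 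You instead prove a $d$-generator generalization of Lemma~\ref{comm} --- conjugates of the $c_{ij}$ by ordered monomials already generate $G'$ --- and then finish with a chain of $d$ applications of Corollary~\ref{H^y}; both the identity and your use of the corollary are correct. One remark on your bookkeeping: the ``induction on the total degree'' is unnecessary and even slightly awkward, since the degree of the correction terms need not strictly decrease (the expansion of $[x_l^{a_l},x_k^{-1}]$, say, involves conjugators $x_k^{-1}x_l^{b}$ with $|b|$ as large as $|a_l|$). The cleaner observation is that each correction $[x_l^{a_l},x_k^{\pm1}]$, conjugated by the tail $x_{l+1}^{a_{l+1}}\cdots x_d^{a_d}$, expands \emph{directly} into a product of elements $c_{kl}^{\pm1}$ conjugated by ordered monomials (because $[x_l^{a_l},x_k^{\pm1}]$ is a product of conjugates of $c_{kl}^{\pm1}$ by powers of $x_l$, possibly prefixed by $x_k^{-1}$, and $k<l$ keeps every conjugator in ordered form); these are generators of your $N$, so after collection the conjugated generator is a conjugate of a generator of $N$ by an element of $N$, hence lies in $N$ because $N$ is a subgroup --- no induction needed. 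As for what each approach buys: yours yields an explicit generating set for $G'$ and treats all $d$ uniformly, with Lemma~\ref{comm} becoming the case $d=2$ rather than a black-box base case, at the price of the collection computation; the paper's induction avoids all collection at the price of being less explicit and needing the separate two-generator lemma.
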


\begin{proof}
First assume that $X=\{x,y\}$. Then $G'=\langle [x,y]^{x^ry^s}\,|\,r,s\in\mathbb{Z}\rangle \rangle$ by Lemma \ref{comm} and we are done since $(\langle [x,y]\rangle^{\langle x\rangle})^{\langle y\rangle}$ is finitely generated by Corollary \ref{H^y}.
Now, let $X=\{x_1,\dots,x_{d+1}\}$ with $d\geq 2$, and suppose that the result is true for subgroups which can be generated by at most $d$ elements of $X$. Set $G_i=\langle x_1,\dots,x_{i-1},x_{i+1},\dots,x_{d+1}\rangle$ for any $i=1,\dots,d+1$.
The induction hypothesis yields that $G_i'$ is finitely generated and so is $(G_i')^{\langle x_i\rangle}$, by Corollary \ref{H^y}.
Straightforward calculation shows that $K=\langle(G_i')^{\langle x_i\rangle}\,|\,i=1,\dots,d+1\rangle$ is a normal subgroup of $G$ and hence $G'=K$. In particular, $G'$ is finitely generated.
\end{proof}

Now, an easy induction gives us the following corollary.

\begin{cor}\label{G^s}
Let $m\geq 1$ and $X$ be a normal commutator-closed subset of a group $G$. Assume that $G$ is generated by finitely many elements of $X$. If $x^m$ is Engel for all $x\in X$, then each term of the derived series of $G$ is finitely generated.
\end{cor}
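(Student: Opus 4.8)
The plan is to prove, by induction on $k$, the stronger statement that the $k$-th derived subgroup $G^{(k)}$ is generated by finitely many elements of $X_k:=X\cap G^{(k)}$, where $X_k$ is again a normal commutator-closed subset of $G^{(k)}$ all of whose elements have $m$-th power Engel. Carrying the set $X_k$ through the induction, rather than merely recording that $G^{(k)}$ is finitely generated, is what will let us feed $G^{(k)}$ back into the argument in the role previously played by $G$. For $k=0$ there is nothing to prove: here $X_0=X$ and $G^{(0)}=G$ is generated by finitely many elements of $X$ by hypothesis.

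For the inductive step, suppose $G^{(k)}=\langle a_1,\dots,a_s\rangle$ with each $a_i\in X_k$. Since every $a_i^m$ is Engel, Lemma \ref{BM} applied to $G^{(k)}$ shows that $G^{(k+1)}=(G^{(k)})'$ is finitely generated. To locate generators inside $X$, I would invoke the standard description of the derived subgroup as a normal closure: $G^{(k+1)}$ is the normal closure in $G^{(k)}$ of the commutators $[a_i,a_j]$, hence is generated by the conjugates $[a_i,a_j]^g$ with $g\in G^{(k)}$. As $X_k$ is commutator-closed we have $[a_i,a_j]\in X_k$, and as $X_k$ is normal in $G^{(k)}$ each such conjugate again lies in $X_k$; being also in $G^{(k+1)}$, these generators lie in $X_{k+1}=X\cap G^{(k+1)}$, so in fact $G^{(k+1)}=\langle X_{k+1}\rangle$.

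It remains to combine the two facts. Since $G^{(k+1)}$ is finitely generated and equals $\langle X_{k+1}\rangle$, it is already generated by a finite subset of $X_{k+1}$: each of finitely many group-generators is a word in finitely many elements of $X_{k+1}$. One checks directly, exactly as for $X$, that $X_{k+1}$ is normal and commutator-closed in $G^{(k+1)}$ and that its elements have $m$-th power Engel, and this closes the induction. The main obstacle is precisely this need to reuse $X$ at every stage: Lemma \ref{BM} only produces finite generation of the first derived subgroup, so iterating it requires knowing that the generators can be taken inside a normal, commutator-closed set of Engel-power elements. It is here that both hypotheses on $X$, together with the normal-closure description of $G'$ and the elementary extraction of a finite generating subset, are essential.
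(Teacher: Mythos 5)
Your proof is correct and follows essentially the same route as the paper: an induction along the derived series, applying Lemma \ref{BM} at each stage, with normality and commutator-closedness of $X$ used to realize each $G^{(k)}$ as generated by finitely many elements of $X$. The only difference is that you spell out the step the paper leaves implicit (the normal-closure description of the derived subgroup plus extraction of a finite generating subset of $X\cap G^{(k+1)}$), which is a welcome clarification rather than a new idea.
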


\begin{proof}
We know that $G'$ is finitely generated by Lemma \ref{BM}. Suppose that $G^{(k)}$ is finitely generated, with $k\geq 1$. Since $X$ is normal commutator-closed, $G^{(k)}$ is generated by finitely many elements of $X$. Thus, Lemma \ref{BM} applies and yields that $G^{(k+1)}$ is finitely generated.
\end{proof}

In any group $G$ there is a unique maximal normal locally nilpotent
subgroup (called the Hirsch-Plotkin radical) containing all normal locally nilpotent subgroups of $G$ \cite[12.1.3]{Rob}.
According to Gruenberg \cite[12.3.3]{Rob} the Hirsch-Plotkin radical of a soluble group is precisely the set of all Engel elements. A straightforward corollary is that the same holds when the group is locally soluble. Next, we extend this result to the class of groups having an ascending normal series with locally soluble factors.

\begin{lem}\label{N}
Let $G$ be a group generated by a set of Engel elements and $H$ a locally soluble normal subgroup of $G$. Then $[H,G]$ is locally nilpotent.
\end{lem}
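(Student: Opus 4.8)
The plan is to exhibit a generating set of $[H,G]$ consisting of elements that are left Engel elements of $H$, and then to invoke the locally soluble version of Gruenberg's theorem recalled just above the statement: in the locally soluble group $H$ the set of left Engel elements coincides with the Hirsch--Plotkin radical $\mathrm{HP}(H)$, which is locally nilpotent. Once $[H,G]$ is generated by such elements it must lie in $\mathrm{HP}(H)$ and hence be locally nilpotent. Note at the outset that $[H,G]\le H$ because $H$ is normal, so all the commutators live inside the locally soluble group $H$.

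First I would fix a convenient generating set. Put $M=\langle[h,e]\mid h\in H,\ e\text{ an Engel element of }G\rangle$. One checks that $M$ is normal in $G$, since a conjugate $[h,e]^g=[h^g,e^g]$ is again of this form: $h^g\in H$ as $H$ is normal, and $e^g$ is again an Engel element because conjugates of Engel elements are Engel. Moreover, modulo $M$ every Engel element of $G$ centralizes the image of $H$; as the Engel elements include the given generating set and hence generate $G$, this forces $[H,G]\le M$, so that $[H,G]=M$. It therefore suffices to prove that each commutator $z=[h,e]$, with $h\in H$ and $e$ an Engel element of $G$, is a left Engel element of $H$.

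The heart of the matter is this last assertion, and it is where Corollary~\ref{H^y} enters. Fix an arbitrary $x\in H$; I must produce $n$ with $[x,_n z]=1$. The subgroup $R=\langle x,h\rangle^{\langle e\rangle}$ is finitely generated by Corollary~\ref{H^y} (since $e$ is Engel), and it is soluble because it is a finitely generated subgroup of the locally soluble group $H$. Consequently $T=\langle x,h,e\rangle=R\langle e\rangle$ is soluble-by-cyclic, hence soluble. Inside the soluble group $T$ the element $e$ is still a left Engel element, so Gruenberg's theorem gives $e\in\mathrm{HP}(T)$; as $\mathrm{HP}(T)$ is normal, the element $z=[h,e]=(e^{-1})^{h}e$ also lies in $\mathrm{HP}(T)$. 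Finally, any element of a locally nilpotent normal subgroup is a left Engel element of the ambient group, so $[x,_n z]=1$ for a suitable $n$. Since $x\in H$ was arbitrary, $z$ is a left Engel element of $H$.

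Putting the two steps together, $[H,G]=M$ is generated by left Engel elements of $H$, all of which belong to the subgroup $\mathrm{HP}(H)$; hence $[H,G]\le\mathrm{HP}(H)$ is locally nilpotent. The main obstacle is precisely the step from ``Engel in the small subgroup $\langle h,e\rangle$'' to ``Engel in all of $H$'': membership of $z$ in $\mathrm{HP}(\langle h,e\rangle)$ only controls commutators $[x,_n z]$ for $x\in\langle h,e\rangle$, so the argument must be re-run for each test element $x\in H$. The role of Corollary~\ref{H^y} is exactly to guarantee that the enlarged group $\langle x,h,e\rangle$ remains soluble, so that Gruenberg's theorem keeps applying.
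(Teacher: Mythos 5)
Your proposal is correct and runs on essentially the same engine as the paper's proof: both reduce $[H,G]$ to a subgroup generated by commutators with Engel elements, use Corollary \ref{H^y} to extract finitely generated (hence soluble) subgroups of the locally soluble $H$, and then invoke Gruenberg's theorem to place the relevant elements in a Hirsch--Plotkin radical. The only difference is organizational: the paper argues subgroup-wise, showing $\langle H,x\rangle$ is locally soluble so that each $[H,x]$ lands in its Hirsch--Plotkin radical and then gluing these normal-in-$H$ locally nilpotent subgroups, whereas you argue element-wise, verifying that each commutator $[h,e]$ is a left Engel element of $H$ via finitely generated soluble subgroups $\langle x,h,e\rangle$ and then applying the locally soluble case of Gruenberg's theorem to $H$ itself.
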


\begin{proof}
Let $X$ be a set of Engel elements such that $G=\langle X\rangle$. Let $N$ be the subgroup generated by all subgroups of the form $[H,x]$, where $x$ ranges through $X$. Thus $N\leq H$ and since $[H,x]\leq N$, it follows that every $x\in X$ normalizes $N$. Since $G=\langle X\rangle$, we conclude that $N$ is normal in $G$ and hence $N=[H,G]$. The proof of the lemma will be complete once it is shown that $N$ is locally nilpotent. We remark that each of the subgroups $[H,x]$ is normal in $H$. So it is sufficient to show that $[H,x]$ is locally nilpotent for any $x\in X$. Let us show first that $\langle H,x\rangle$ is locally soluble. For any $x\in X$, let $K=\langle h_1,\dots,h_r,x\,|$ $h_j\in H, j=1,\dots,r \rangle$. By Corollary \ref{H^y}, the subgroup $J=\langle h_j\,|\,j=1,\dots,r\rangle^{\langle x\rangle}$ of $H$ is finitely generated and thus soluble. As $J$ is normal in $K$, the subgroup $K$ is soluble. In particular, $\langle H,x\rangle$ is locally soluble. Therefore, by Gruenberg's result, $x$ belongs to the Hirsch-Plotkin radical of $\langle H,x\rangle$. It follows that $[H,x]$ is locally nilpotent, as required.
\end{proof}

\begin{pro}\label{asc}
Let $G$ be a group with an ascending normal series whose factors are locally soluble. Then the set of all Engel elements of $G$ coincides with the Hirsch-Plotkin radical of $G$.
\end{pro}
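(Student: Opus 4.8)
The plan is to establish the two inclusions separately. One direction is general and easy: every element $x$ of the Hirsch--Plotkin radical $R$ is a left Engel element. Indeed, for $g\in G$ we have $[g,x]=[x,g]^{-1}\in R$ because $R$ is normal, so $\langle x,[g,x]\rangle$ is a finitely generated subgroup of the locally nilpotent group $R$, hence nilpotent of some class $c$; since every $[g,{}_k x]$ lies in this subgroup, $[g,{}_{c+1}x]=1$ and $x$ is a left Engel element. For the converse it is enough to prove that the subgroup $L$ generated by the set $E$ of all Engel elements of $G$ is locally nilpotent. The set $E$ is invariant under automorphisms, so $L$ is characteristic and is generated by Engel elements; once $L$ is shown to be locally nilpotent we get $L\le R$, and together with the obvious $R\subseteq E\subseteq L$ this forces all three to coincide.

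To prove that $L$ is locally nilpotent I would use the ascending series. If $\{G_\alpha\}$ denotes the given series, then $L_\alpha:=L\cap G_\alpha$ is an ascending normal series of $L$ whose factors embed in $G_{\alpha+1}/G_\alpha$ and so are locally soluble. I would then show by transfinite induction that $L_\alpha\le R(L)$ for every $\alpha$, where $R(L)$ is the Hirsch--Plotkin radical of $L$. Then $L$, being the top term of this series, satisfies $L\le R(L)$, so $L$ is locally nilpotent. The initial step is trivial and the limit step is immediate, since $R(L)$ contains the union of an ascending chain of subgroups already contained in it.

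The heart of the argument is the successor step: assuming $L_\beta\le R(L)$, prove $L_{\beta+1}\le R(L)$. Passing to $\bar L:=L/L_\beta$, which is again generated by Engel elements, the factor $\bar L_{\beta+1}=L_{\beta+1}/L_\beta$ is a locally soluble normal subgroup, so Lemma~\ref{N} applies. Unwinding its proof, for each Engel generator $\bar x$ the group $\langle\bar L_{\beta+1},\bar x\rangle$ is locally soluble, Gruenberg's theorem places $\bar x$ in its Hirsch--Plotkin radical, and therefore each $[\bar L_{\beta+1},\bar x]$ is locally nilpotent and normal in $\bar L_{\beta+1}$; by the Hirsch--Plotkin theorem the subgroup these generate, namely $[\bar L_{\beta+1},\bar L]$, is locally nilpotent. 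The remaining, and delicate, task is to upgrade this to the local nilpotency of $\bar L_{\beta+1}$ itself and then to lift the conclusion through $L_\beta\le R(L)$ to obtain $L_{\beta+1}\le R(L)$; here I would again exploit that $\bar L_{\beta+1}$ is locally soluble, invoking Gruenberg's description of its Engel elements on each finitely generated section and assembling the resulting locally nilpotent normal pieces by the Hirsch--Plotkin theorem.

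I expect this last part of the successor step to be the main obstacle. The reason is structural: neither local solubility nor local nilpotency is closed under extensions---the infinite dihedral group is locally soluble yet is an extension of a locally nilpotent group by a locally nilpotent group that is not locally nilpotent---so knowing that $L_\beta$ and $[\bar L_{\beta+1},\bar L]$ are well behaved does not formally yield that $L_{\beta+1}$ is. What breaks the symmetry is precisely the Engel hypothesis, which, through Lemma~\ref{N} and Gruenberg's theorem, forces each generator to act on the relevant locally soluble sections as an element of a Hirsch--Plotkin radical; this is what lets one replace a naive extension argument by an amalgamation of genuinely locally nilpotent normal subgroups via the Hirsch--Plotkin theorem. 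Making this amalgamation yield information about $L_{\beta+1}$ on the nose, rather than only modulo $L_\beta$, is the point that requires the most care.
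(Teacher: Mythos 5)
Your proposal starts out exactly as the paper does: the easy inclusion (Hirsch--Plotkin radical $\subseteq$ Engel elements), the reduction to showing that the subgroup $L$ generated by all Engel elements is locally nilpotent, the observation that intersecting the given series with $L$ yields an ascending normal series of $L$ with locally soluble factors, and the application of Lemma~\ref{N} to those factors. But from there you commit to a transfinite induction claiming $L_\alpha\le R(L)$ for all $\alpha$, and the successor step of that induction --- which you yourself flag as the main obstacle --- is a genuine gap, not a matter of polish. Knowing $L_\beta\le R(L)$ and controlling $L_{\beta+1}/L_\beta$ does not place $L_{\beta+1}$ inside $R(L)$, for precisely the extension-closure failure you name. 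Your sketched repair does not close it: Gruenberg's theorem describes the Engel elements \emph{of} the locally soluble group $\bar L_{\beta+1}$, but the elements you actually know to be Engel are the generators of $\bar L$, which need not lie in $\bar L_{\beta+1}$; and Lemma~\ref{N} only yields that $[\bar L_{\beta+1},\bar L]$ is locally nilpotent, leaving $\bar L_{\beta+1}$ as a (locally nilpotent)-by-abelian extension --- the same obstacle one level down. So the induction never closes.

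The paper avoids this induction entirely. After applying Lemma~\ref{N} to each factor, it \emph{refines} the series: between $E_\alpha$ and $E_{\alpha+1}$ insert $[E_{\alpha+1},E]E_\alpha$; the lower factor is locally nilpotent by Lemma~\ref{N}, and the upper factor is central in $E/[E_{\alpha+1},E]E_\alpha$, hence abelian. Thus $E$ is a group generated by Engel elements possessing an ascending series with \emph{locally nilpotent} factors, and the paper then invokes the known result \cite[Exercise 12.3.7]{Rob} (in essence Plotkin's theorem on radical groups): such a group is locally nilpotent. That citation is exactly the missing ingredient in your argument --- it is the theorem that absorbs the extension problem you correctly identified but could not resolve. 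Without appealing to it (or proving it), your approach does not yield the proposition; with it, your transfinite induction becomes unnecessary.
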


\begin{proof}
It is enough to prove that the subgroup $E$ generated by all Engel elements of $G$ is locally nilpotent. Clearly, $E$ has an ascending normal series with locally soluble factors. Then Lemma \ref{N} can be applied to each factor of the series giving a refined ascending series whose factors are locally nilpotent. Thus, $E$ is a group having an ascending series with locally nilpotent factors and the claim follows from \cite[Exercise 12.3.7]{Rob}.
\end{proof}

\section{On Lie Algebras Associated with Groups}

Let $L$ be a Lie algebra over a field. We use the left normed notation; thus if $l_1,\dots,l_n$ are elements of $L$ then
$$[l_1,\dots,l_n]=[\dots[[l_1,l_2],l_3],\dots,l_n].$$
An element $a\in L$ is called ad-nilpotent if there exists a positive integer $n$ such that
$$[x,\underbrace{a,\dots,a}_{n \ times}]=0$$
for all $x\in L$. If $n$ is the least integer with the above property then we say that $a$ is ad-nilpotent of index $n$. Let $X$ be any subset of $L$. By a commutator in elements of $X$ we mean any element of $L$  that could be obtained from elements of $X$ by repeated operation of commutation with an arbitrary system of brackets including the elements of $X$. Denote by $F$ the free Lie algebra over the same field as $L$ on countably many free generators $x_1,x_2,\dots$. Let $f=f(x_1,\dots,x_n)$ be a non-zero element of $F$. The algebra $L$ is said to satisfy the identity $f\equiv 0$ if $f(a_1,\dots,a_n)=0$ for any $a_1,\dots,a_n\in L$. In this case we say that $L$ is $PI$. We are now in position to quote a theorem of Zelmanov \cite{ze} which has numerous important applications to group theory.

\begin{thm}\label{Zelmanov}
Let $L$ be a Lie algebra generated by finitely many elements $a_1,a_2,\dots,a_m$ such that all commutators in $a_1,a_2,\dots,a_m$ are ad-nilpotent. If $L$ is $PI$, then it is nilpotent.
\end{thm}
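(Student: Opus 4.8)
The statement is Zelmanov's theorem on $PI$ Lie algebras, and the plan is to route the argument through the theory of \emph{sandwich algebras}. Recall that an element $c\in L$ is a \emph{sandwich} if $[x,c,c]=0$ and $[x,c,y,c]=0$ for all $x,y\in L$; equivalently $\mathrm{ad}\,c$ satisfies $(\mathrm{ad}\,c)^2=0$ together with $(\mathrm{ad}\,c)(\mathrm{ad}\,x)(\mathrm{ad}\,c)=0$. The backbone of the argument is the Kostrikin--Zelmanov theorem that a Lie algebra generated by finitely many sandwiches is nilpotent, with class bounded in terms of the number of generators. Granting this, the whole problem reduces to manufacturing enough sandwiches inside $L$ out of the ad-nilpotent data.

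First I would set up the reduction. Since $L=\langle a_1,\dots,a_m\rangle$ is finitely generated and $PI$, I would replace the identity by a multilinear one and aim to bound the nilpotency class of $L$ by a function of $m$, the degree of the identity, and the ad-nilpotency indices of the commutators in the $a_i$. I would then run an induction on the maximal ad-nilpotency index $k$ of these commutators. The base case $k=1$ is immediate, so the content lies entirely in lowering $k$.

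The inductive step is where sandwiches enter. Given an element $a$ with $(\mathrm{ad}\,a)^{k}=0$, the image of $(\mathrm{ad}\,a)^{k-1}$ consists of elements annihilated by $\mathrm{ad}\,a$, which already satisfy half of the sandwich condition; the remaining half is extracted by linearizing the polynomial identity and playing it against the ad-nilpotency of the commutators. Zelmanov's key discovery is that a non-nilpotent $PI$ Lie algebra of this type must contain a nonzero sandwich; one then passes to the quotient by the ideal it generates --- which is nilpotent by the sandwich-algebra theorem --- applies the induction hypothesis to the quotient, and recovers nilpotency of $L$ from the fact that an extension of a nilpotent ideal by a nilpotent algebra is nilpotent.

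The hard part, and indeed the deepest single ingredient of the whole subject, is the sandwich machinery itself: both the global nilpotency of sandwich algebras and the production of a nonzero sandwich in a non-nilpotent $PI$ Lie algebra rest on Zelmanov's classification-free structure theory of Jordan algebras, which converts a hypothetical non-nilpotent counterexample into an infinite-dimensional Jordan object incompatible with the identity. This is the portion that cannot be abbreviated and for which the citation \cite{ze} is essential; everything else amounts to organizing the induction on the ad-nilpotency index around it.
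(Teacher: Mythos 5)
The first thing to note is that the paper does not prove this statement at all: Theorem \ref{Zelmanov} is imported as a black box, quoted from Zelmanov \cite{ze} (``We are now in position to quote a theorem of Zelmanov\dots''), and reproving it is far outside the article's scope. So the honest comparison is between your sketch and a bare citation. Your outline does name the genuine ingredients of Zelmanov's argument --- sandwich elements, the Kostrikin--Zelmanov theorem that a Lie algebra generated by finitely many sandwiches is nilpotent, and the Jordan-theoretic machinery used to manufacture sandwiches --- and you correctly concede that this core cannot be reproduced and must be cited. To that extent your proposal is, in effect, the same as the paper's treatment: everything ultimately rests on \cite{ze}.

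Read as a proof, however, your reduction contains a step that is simply false. You close the induction by claiming that ``an extension of a nilpotent ideal by a nilpotent algebra is nilpotent.'' For Lie algebras this fails: the two-dimensional algebra $L$ with basis $x,y$ and $[x,y]=y$ has the abelian (hence nilpotent) ideal spanned by $y$, with one-dimensional, hence abelian, quotient, yet $L$ is not nilpotent, since $\gamma_k(L)=\langle y\rangle$ for all $k\geq 2$; nilpotent-by-nilpotent only yields solvable. This is exactly why the proof cannot be organized as ``find a sandwich, kill the ideal it generates, induct on the ad-nilpotency index'': passing to the quotient preserves the hypotheses, but nilpotency does not lift back through the extension, so your induction does not close. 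A second, smaller gap: the sandwich-algebra theorem applies to a subalgebra generated by finitely many sandwiches, whereas the ideal generated by a single sandwich $c$ is not generated, as a subalgebra, by sandwiches --- a bracket $[c,x]$ satisfies $(\mathrm{ad}\,[c,x])^2=-\,\mathrm{ad}\,c\,(\mathrm{ad}\,x)^2\,\mathrm{ad}\,c$, which need not vanish, so $[c,x]$ is in general not a sandwich --- and the local nilpotency of that ideal is itself a deep separate lemma, not a formal consequence of the theorem you invoke. Neither gap matters for the paper, whose correct move is precisely to cite the result; but as an argument, what you have written is an expository gloss with a broken inductive step, not a proof.
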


Let $G$ be a group. Given a prime $p$, a Lie algebra can be associated with the group $G$ as follows. We denote by $D_i=D_i(G)$ the $i$th dimension subgroup of $G$ in characteristic $p$ (see for example \cite[Chap. 8]{hb}). These subgroups form a central series of $G$ known as the Zassenhaus-Jennings-Lazard series. Set $L(G)=\oplus D_i/D_{i+1}$. Then $L(G)$ can naturally be viewed as a Lie algebra over the field $\mathbb F_p$ with $p$ elements. For an element $x\in D_i - D_{i+1}$ we denote by $\tilde x$ the element $xD_{i+1}\in L(G)$.

\begin{lem}[Lazard, \cite{l1}]\label{Laz}
For any $x\in G$ we have $(ad\,\tilde{x})^p=ad\,(\tilde{x^p})$.
\end{lem}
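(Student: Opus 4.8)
The plan is to realize the associated Lie algebra $L(G)$ inside the associated graded algebra of the group algebra $\mathbb{F}_p G$ and to read off the identity from the elementary behaviour of the adjoint operator in an associative algebra of characteristic $p$. Let $I$ be the augmentation ideal of $\mathbb{F}_p G$, and recall that the dimension subgroups can be described as $D_i = G\cap(1+I^i)$. Form the associated graded associative algebra $A=\bigoplus_i I^i/I^{i+1}$, and for $x\in D_i\setminus D_{i+1}$ send $\tilde x$ to the image $\overline{x-1}$ of $x-1$ in $I^i/I^{i+1}$. Because $D_i=G\cap(1+I^i)$, this map $\varphi$ is well defined and injective on each homogeneous component: $\varphi(\tilde x)=0$ happens exactly when $x\in D_{i+1}$, that is, when $\tilde x=0$ in $L(G)$. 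Thus injectivity, which is what will let me transport the identity back to $L(G)$, comes for free from the description of the $D_i$.

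First I would check that $\varphi$ is a homomorphism of Lie algebras, the group commutator going to the associative commutator $[a,b]=ab-ba$. This rests on the congruence $[g,h]-1=g^{-1}h^{-1}\bigl((g-1)(h-1)-(h-1)(g-1)\bigr)$ together with $g^{-1}h^{-1}\equiv 1$ modulo $I$; for $g\in D_i$ and $h\in D_j$ the bracketed term lies in $I^{i+j}$, so modulo $I^{i+j+1}$ one gets $\overline{[g,h]-1}=[\,\overline{g-1},\overline{h-1}\,]$. Additivity on each component is the easier congruence $(xy-1)-(x-1)-(y-1)=(x-1)(y-1)\in I^{2i}\subseteq I^{i+1}$.

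Next comes the crucial computation of the $p$-th power. Since $x-1$ commutes with the scalar $1$, the binomial theorem gives $x^p-1=\bigl((x-1)+1\bigr)^p-1=\sum_{k=1}^{p}\binom{p}{k}(x-1)^k$, and in characteristic $p$ the intermediate coefficients vanish, leaving the identity $x^p-1=(x-1)^p$ in $\mathbb{F}_p G$. Consequently, if $x\in D_i$ then $x^p\in D_{ip}$ and $\varphi(\widetilde{x^p})=\overline{x^p-1}=\overline{(x-1)^p}=\varphi(\tilde x)^p$, the last power being taken in the associative algebra $A$.

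Finally I would invoke the elementary fact that in any associative algebra over a field of characteristic $p$ one has $(ad\,a)^p=ad\,(a^p)$: writing $ad\,a$ as the difference of the commuting right- and left-multiplication operators $R_a,L_a$ and using $(R_a-L_a)^p=R_a^p-L_a^p=R_{a^p}-L_{a^p}$ in characteristic $p$. Applying this to $a=\varphi(\tilde x)$ and combining with the two previous paragraphs, for every $\tilde y$ the images $\varphi\bigl((ad\,\tilde x)^p\,\tilde y\bigr)$ and $\varphi\bigl(ad\,(\widetilde{x^p})\,\tilde y\bigr)$ coincide in $A$; injectivity of $\varphi$ then yields $(ad\,\tilde x)^p=ad\,(\widetilde{x^p})$. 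The only genuinely delicate point is the bookkeeping in $A$ — making sure the bracket and power computations land in the correct homogeneous component and that the sign convention for $ad$ matches that of the group commutator — while the substantive input is the augmentation-ideal description of the $D_i$; granting that, everything reduces to the two freshman-dream identities above.
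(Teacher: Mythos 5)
The paper does not actually prove this lemma --- it is quoted as a known result of Lazard with a citation to \cite{l1} --- so there is no internal proof to measure you against; your argument has to stand on its own, and it does. What you give is the standard (and correct) proof: embed $L(G)$ into the graded algebra $\bigoplus_i I^i/I^{i+1}$ of the group algebra $\mathbb{F}_p G$ via $\tilde{x}\mapsto \overline{x-1}$, check that this is an injective graded Lie homomorphism, and then reduce the statement to the two characteristic-$p$ identities $x^p-1=(x-1)^p$ and $(R_a-L_a)^p=R_{a^p}-L_{a^p}$ for the commuting right- and left-multiplication operators. All the computations you cite (the commutator congruence $[g,h]-1\equiv (g-1)(h-1)-(h-1)(g-1) \bmod I^{i+j+1}$, additivity, the binomial collapse, injectivity component-by-component) are right, and injectivity on each homogeneous component is indeed all that is needed, since $[\tilde{y},_{p}\,\tilde{x}]$ and $[\tilde{y},\widetilde{x^p}]$ lie in the same component.

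The one point you should flag more carefully is the status of your ``substantive input'' $D_i=G\cap(1+I^i)$. With the paper's convention (dimension subgroups in characteristic $p$ as in Huppert--Blackburn, Chap.~8) this \emph{is} the definition, so your proof is complete as written; but if one instead defines the $D_i$ by the Zassenhaus--Jennings--Lazard formula $D_i=\prod_{jp^k\geq i}\gamma_j(G)^{p^k}$, then the equality of the two filtrations is Jennings' theorem, a genuinely nontrivial result that would have to be invoked, not assumed. Either way this is a citation issue, not a gap in the logic: granted the augmentation-ideal description, your argument is a complete and self-contained proof of the lemma, essentially the one found in the literature.
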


An important criterion for a Lie algebra to be $PI$ is the following.

\begin{lem}[Wilson, Zelmanov, \cite{wize}]\label{WZ}
Let $G$ be any group satisfying a group law. Then $L(G)$ is $PI$.
\end{lem}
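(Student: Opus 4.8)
The plan is to realize $L(G)$ inside the graded ring attached to the group algebra and to convert the group law into a polynomial identity there. Work over $K=\mathbb{F}_p$ and let $\Delta$ be the augmentation ideal of $KG$. The dimension subgroups satisfy $D_n=G\cap(1+\Delta^n)$, so the $\Delta$-adic filtration of $KG$ induces the Zassenhaus--Jennings--Lazard series, and for the associated graded ring $\mathrm{gr}(KG)=\bigoplus_n \Delta^n/\Delta^{n+1}$ one has, by Quillen's theorem, an isomorphism $\mathrm{gr}(KG)\cong u(L(G))$ onto the restricted universal enveloping algebra of $L(G)$ (the restricted structure being exactly the compatibility recorded in Lemma \ref{Laz}). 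Under this isomorphism $L(G)$ is the restricted Lie subalgebra generated by $L_1=\Delta/\Delta^2$, and the leading term of $g-1$ for $g\in D_k\setminus D_{k+1}$ is precisely $\tilde g\in L_k$. Since the elements $g-1$ ($g\in G$) form a $K$-basis of $\Delta$, their leading terms span $L(G)$ as an $\mathbb{F}_p$-vector space. This spanning property is what will let me turn a \emph{multilinear} identity, verified only on the special elements $g-1$, into a genuine identity of $L(G)$.

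Next I would encode the law. Suppose $G$ satisfies $w\equiv 1$ for a nontrivial reduced word $w$ in $s$ variables. Substituting $x_i\mapsto 1+t_i$ and $x_i^{-1}\mapsto(1+t_i)^{-1}=1-t_i+t_i^2-\cdots$ in the free associative power-series algebra $K\langle\langle t_1,\dots,t_s\rangle\rangle$, I write $W(t_1,\dots,t_s)=w(1+t_1,\dots,1+t_s)-1$, a series with zero constant term. Because $w$ is a law, $W(g_1-1,\dots,g_s-1)=0$ for every choice of $g_i\in G$. Let $W_d$ be the initial (lowest-degree) homogeneous form of $W$. Reducing the identity $W(g_1-1,\dots)=0$ modulo the appropriate power of $\Delta$ shows that $W_d$, evaluated on the leading terms of the $g_i-1$, vanishes in $\mathrm{gr}(KG)$. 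Multilinearizing $W_d$ then produces a multilinear associative polynomial $\widehat W$ that vanishes on all substitutions by leading terms $g-1$; since these span $L(G)$, multilinearity forces $\widehat W$ to vanish on the whole subspace $L(G)\subseteq u(L(G))$. Via the standard correspondence between associative and Lie polynomial identities, a nonzero multilinear associative polynomial vanishing on the Lie algebra $L(G)$ supplies a nonzero Lie identity, so that $L(G)$ is $PI$.

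The main obstacle is nontriviality. A nontrivial group word can easily expand to a series whose initial form, or whose multilinearization, collapses to zero over $\mathbb{F}_p$: already for $w=x^p$ one finds $W=(1+t)^p-1=t^p$, so the linear term disappears and the surviving component sits in degree $p$. One must therefore verify, uniformly and using genuinely that $w$ is a reduced nontrivial law, that the extracted component and its multilinearization do not vanish, and that the resulting Lie polynomial is nonzero. This is complicated by the presence of two distinct gradings — the polynomial degree in the $t_i$ and the augmentation degree (the power of $\Delta$), which differ because $g_i-1$ may lie deep in the filtration — so the bookkeeping that guarantees a provably nonzero homogeneous survivor is the technical heart of the argument. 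Once a nonzero multilinear survivor is secured, the passage through $u(L(G))$ and the spanning property of the first paragraph are the routine glue.
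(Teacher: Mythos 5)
First, a point of comparison: the paper does not prove this lemma at all --- it is quoted from Wilson--Zelmanov \cite{wize} --- so your proposal can only be measured against the original argument. Your road map is in fact the same as theirs: pass to $\mathbb{F}_pG$ with the $\Delta$-adic filtration, identify $L(G)$ inside $\mathrm{gr}(\mathbb{F}_pG)\cong u(L(G))$ via Jennings/Quillen, expand the law through $x_i\mapsto 1+t_i$, extract a multilinear survivor that vanishes on tuples $g_i-1$, conclude by multilinearity and spanning that it vanishes on $L(G)$, and convert this weak associative identity into a Lie identity. That last conversion is indeed standard and sound: the adjoint representation of the restricted Lie algebra $L(G)$ extends to $u(L(G))$ (this is exactly where the restricted structure of Lemma \ref{Laz} is used), and left-normed multilinear commutators $[l_0,l_{\sigma(1)},\dots,l_{\sigma(d)}]$ are linearly independent in the free Lie algebra, so a nonzero multilinear associative relation yields a nonzero multilinear Lie identity.

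However, the middle of your argument has genuine gaps, and not only the one you flag. (i) The claim that reducing $W(g_1-1,\dots,g_s-1)=0$ modulo a power of $\Delta$ makes the \emph{degree}-initial form $W_d$ vanish on leading terms is false when the $g_i$ lie at different depths $k_i$ of the filtration: what is forced to vanish is the initial form with respect to the \emph{weighted} degree $\sum_ie_ik_i$, and which monomials realize it changes with $(k_1,\dots,k_s)$ (a degree-$(d+1)$ monomial omitting a heavy variable beats a degree-$d$ monomial containing it). (ii) Even where (i) holds, ``multilinearizing $W_d$'' cannot inherit vanishing on leading-term tuples: linearization is an alternating sum of evaluations of $W_d$ at \emph{sums} of the substituted elements, and a sum of homogeneous elements of different degrees is not a leading term, so vanishing on homogeneous tuples does not pass to $\widehat W$. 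The correct order of operations, as in \cite{wize}, is the reverse of yours: linearize the \emph{law} first, at the level of the series $W$ and the evaluation set $S=\{g-1:g\in G\}$, exploiting that $0\in S$ and that $S$ is closed under $a\circ b=a+b+ab$ (not under $+$); then remove, by inclusion--exclusion over substitutions $t_i\mapsto 0$, every monomial that misses some variable. After that, every non-multilinear monomial repeats a variable, so its weighted degree exceeds $\sum_ik_i$ for \emph{every} choice of weights, and only then does evaluation yield a single weight-independent multilinear identity on $L(G)$. (iii) Nonvanishing is nowhere established: of $W$ itself over $\mathbb{F}_p$ (this needs injectivity of the Magnus map mod $p$, i.e.\ residual finite-$p$-ness of free groups), and of the linearized survivor through all these operations (true, since linearizations of distinct monomials of equal multidegree cannot cancel, but it must be proved --- and with the $\circ$-operation rather than formal additive linearization). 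Since you yourself defer precisely this to ``the technical heart of the argument,'' what you have is an accurate outline of the Wilson--Zelmanov proof with its heart missing, and with the extraction/linearization steps as literally stated not correct.
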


Let $L_p(G)$ be the subalgebra of $L(G)$ generated by $D_1/D_2$. Important information about the group $G$ can very often be deduced from nilpotency of the Lie algebra $L_p(G)$. In particular, we will require the following result, due to Lazard \cite{l}.

\begin{thm}\label{lazt}
If $G$ is a finitely generated pro-$p$ group such that $L_p(G)$ is nilpotent, then $G$ is $p$-adic analytic.
\end{thm}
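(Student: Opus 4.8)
The statement is Lazard's theorem, so rather than attempting the whole structure theory from scratch I would organise the argument around a single clean reduction: nilpotency of $L_p(G)$ forces $G$ to have \emph{finite rank}, and one then invokes Lazard's characterisation of $p$-adic analytic pro-$p$ groups as precisely the finitely generated pro-$p$ groups of finite rank. The plan is thus twofold. First I would extract from the hypothesis a uniform bound on the dimensions $d_i=\dim_{\mathbb F_p}(D_i/D_{i+1})$ of the layers of the Zassenhaus-Jennings-Lazard series; then I would feed that bound into the uniform-subgroup machinery to produce the analytic structure.

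For the first step I would analyse the graded algebra via Lemma \ref{Laz}. Since $G$ is a finitely generated pro-$p$ group, the first layer $D_1/D_2$ is finite-dimensional. If $L_p(G)$ is nilpotent, then, being a finitely generated nilpotent Lie algebra, it is finite-dimensional; fix a homogeneous basis $v_1,\dots,v_D$ of it, with Lie-degrees $\delta(v_j)\le c$ for some $c$. The whole graded algebra $L(G)$ is obtained from $L_p(G)$ by adjoining the restricted $p$-power operation, and the content of Lazard's Lemma \ref{Laz}, namely $(ad\,\tilde x)^p=ad\,(\widetilde{x^p})$, is precisely that this operation sends the class of $x$ to the class of $x^p$ and hence multiplies degrees by $p$. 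Consequently every homogeneous element of $L(G)$ is a combination of iterated $p$-th powers $v_j^{[p]^k}$, and $v_j^{[p]^k}$ lives in degree $p^k\,\delta(v_j)$. A given degree $i$ is therefore reached only when $\delta(v_j)=i/p^k$ is a positive integer not exceeding $c$, which forces $p^k\ge i/c$ and so leaves at most $\lfloor\log_p c\rfloor+1$ admissible exponents $k$. This yields $d_i\le D(\lfloor\log_p c\rfloor+1)$ for every $i$, i.e. the layer dimensions are uniformly bounded.

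Next I would convert bounded layer dimensions into finite rank. For every open subgroup $H\le G$ the minimal number of generators $d(H)$ is controlled by the corresponding dimension-subgroup quotients, so the bound on the $d_i$ gives a bound on $d(H)$ that is independent of $H$; this is exactly the assertion that $G$ has finite rank. It then remains to quote the hard core of Lazard's theory: a finitely generated pro-$p$ group of finite rank possesses an open normal uniformly powerful subgroup $U$, and on such a $U$ the group operations, rewritten through the $p^n$-th power maps and the Baker-Campbell-Hausdorff series, endow the underlying set simultaneously with the structure of a $\mathbb Z_p$-Lie lattice and with a $p$-adic analytic atlas; transporting these charts across the finitely many cosets of $U$ exhibits $G$ as a $p$-adic analytic group.

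The first two paragraphs are essentially bookkeeping once Lemma \ref{Laz} is available, so the genuine obstacle is the last step: establishing that finite rank produces an open uniform subgroup and that uniform pro-$p$ groups are analytic. This is the substance of Lazard's correspondence between uniform groups and $\mathbb Z_p$-Lie lattices and cannot be shortcut; it rests on the theory of saturable ($p$-saturated) pro-$p$ groups, the $p$-adic convergence of the Hausdorff series, and the verification that the resulting $\log$ and $\exp$ maps are mutually inverse analytic bijections compatible with the group law. In a self-contained treatment the bulk of the work would lie precisely here, which is why it is invoked as a known theorem of Lazard.
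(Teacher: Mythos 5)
First, a point of comparison: the paper does not prove Theorem \ref{lazt} at all. It is quoted as a known theorem of Lazard, with the citation \cite{l}, and is used as a black box in the proof of Proposition \ref{residual}. So your attempt is not being measured against an in-paper argument but against the literature. Your overall route --- nilpotent $L_p(G)$ $\Rightarrow$ uniformly bounded layer dimensions $d_i$ $\Rightarrow$ finite rank $\Rightarrow$ open uniform subgroup $\Rightarrow$ analytic structure --- is indeed the standard shape of the argument, and deferring the uniform-group/Baker--Campbell--Hausdorff core to Lazard is legitimate, since that is exactly what the paper itself does for the whole statement.

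However, two steps that you dismiss as bookkeeping are genuine theorems, and as written they are gaps. (i) The claim that every homogeneous component of $L(G)$ is spanned by iterated $p$-th powers $v_j^{[p]^k}$ of homogeneous elements of $L_p(G)$ is \emph{not} the content of Lemma \ref{Laz}: that lemma only asserts $(\mathrm{ad}\,\tilde{x})^p=\mathrm{ad}\,(\widetilde{x^p})$, i.e.\ compatibility of the $p$-map with $\mathrm{ad}$. What you need is that $L(G)$ is generated by $D_1/D_2$ as a \emph{restricted} Lie algebra, which rests on the Jennings--Lazard formula $D_i=\prod_{jp^k\ge i}\gamma_j(G)^{p^k}$ (see \cite{hb} or \cite{DDMS}) together with the fact that group commutators map onto Lie brackets in the associated graded algebra; this is true and standard, but it is a separate input that must be cited or proved. (ii) More seriously, the implication ``$d_i$ bounded $\Rightarrow$ $G$ has finite rank'' is itself part of the hard content of Lazard's theorem, not a conversion step. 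Your justification --- that $d(H)$ for open $H$ is ``controlled by the corresponding dimension-subgroup quotients'' --- does not work as stated: the induced filtration $H\cap D_i$ does have layers of dimension at most $r$, but bounded layers of a filtration do not bound the number of generators of a group or of its graded algebra (a graded restricted Lie algebra can have one-dimensional homogeneous components in every degree and still require a generator in each degree). Ruling this out uses the global finiteness of $L(G)$ over the $p$-power operation and is essentially equivalent to the theorem being proved; in \cite{DDMS} this equivalence (bounded $d_i$, finite rank, analyticity) appears among the main characterizations, not as a remark. So your proposal in effect reduces one theorem of Lazard to two other unproved statements, one of which is the theorem in disguise; to be sound it should either prove these bridges or quote them precisely (e.g.\ Lazard's criterion that $G$ is analytic if and only if $L(G)$ is finitely generated as a module under the $p$-power map). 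A minor further point: in your degree count, for fixed $v_j$ and fixed $i$ there is at most one $k$ with $p^k\delta(v_j)=i$, so one gets $d_i\le D$ directly; the $\log_p c$ factor is unnecessary.
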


\section{Proofs of the main results}

The next lemma is taken from \cite{shu3}.

\begin{lem}\label{delta}
Let $G$ be a group and $v$ a multilinear commutator of weight $k\geq 1$. Then every $\delta_k$-value in $G$ is a $v$-value.
\end{lem}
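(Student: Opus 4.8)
The plan is to induct on the weight $k$ of the multilinear commutator $v$, decomposing $v$ according to its outermost bracket and reducing the claim for $\delta_k$-values to the corresponding claims for shorter derived words. The single nontrivial ingredient is a monotonicity property of the derived words, which I would isolate first.

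\textbf{Key sub-claim.} For every $i\ge 1$, each $\delta_i$-value is also a $\delta_{i-1}$-value; consequently, by transitivity, each $\delta_i$-value is a $\delta_j$-value whenever $0\le j\le i$. I would prove this by induction on $i$. For $i=1$ it is immediate, since $\delta_0=x_1$ and hence every group element --- in particular every commutator $[a,b]$ --- is a $\delta_0$-value. For $i\ge 2$, write a $\delta_i$-value as $[\alpha,\beta]$ with $\alpha,\beta$ being $\delta_{i-1}$-values; the induction hypothesis turns $\alpha$ and $\beta$ into $\delta_{i-2}$-values, whence $[\alpha,\beta]$ is a $\delta_{i-1}$-value because $\delta_{i-1}=[\delta_{i-2},\delta_{i-2}]$.

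With this in hand I would prove the statement $P(k)$ --- \emph{for every multilinear commutator $v$ of weight $k$, every $\delta_k$-value is a $v$-value} --- by induction on $k$. The base case $k=1$ is trivial: the only weight-one multilinear commutator is $v=x_1$, so every element, and in particular every $\delta_1$-value, is an $x_1$-value. For the inductive step, let $v$ have weight $k\ge 2$ and write $v=[v_1,v_2]$, where $v_1,v_2$ are multilinear commutators of weights $k_1,k_2\ge 1$ with $k_1+k_2=k$. A given $\delta_k$-value has the form $[\alpha,\beta]$ with $\alpha,\beta$ being $\delta_{k-1}$-values. Since $k_1,k_2\le k-1$, the key sub-claim lets me regard $\alpha$ as a $\delta_{k_1}$-value and $\beta$ as a $\delta_{k_2}$-value. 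The induction hypotheses $P(k_1)$ and $P(k_2)$ then express $\alpha$ as a $v_1$-value and $\beta$ as a $v_2$-value, so $[\alpha,\beta]$ is a $v=[v_1,v_2]$-value, as required.

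The only delicate point is the weight mismatch in the inductive step: when $v$ splits unevenly as $[v_1,v_2]$, the two halves $\alpha,\beta$ arrive as $\delta_{k-1}$-values but must be recognized as $\delta_{k_1}$- and $\delta_{k_2}$-values before the induction hypothesis applies. This is exactly what the monotonicity sub-claim supplies, so once it is established the remaining bookkeeping is routine.
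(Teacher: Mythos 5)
Your proof is correct. The paper itself offers no argument for this lemma---it is simply quoted from \cite{shu3}---but your two-layer induction (first the monotonicity sub-claim that every $\delta_i$-value is a $\delta_j$-value for $j\le i$, then structural induction on the outermost bracket $v=[v_1,v_2]$, using disjointness of the variable sets to substitute independently into $v_1$ and $v_2$) is precisely the standard argument appearing in that reference, so your proposal matches the intended proof and fills the gap the citation leaves.
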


The following result is a straightforward corollary of Lemma 2.1 in \cite{w}.

\begin{lem}\label{Wilson}
Let $G$ be a finitely generated residually finite-nilpotent group. For each prime $p$, let $R_p$ be the intersection of all normal subgroups of $G$ of finite $p$-power index. If $G/R_p$ is nilpotent for each $p$, then $G$ is nilpotent.
\end{lem}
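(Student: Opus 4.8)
The plan is to argue by contradiction, so I would assume that $G$ satisfies the hypotheses but is not nilpotent and extract a contradiction from the fact that each quotient $G/R_p$ has some finite nilpotency class $c_p$. First I would set up two reductions. Because a finite nilpotent group is the direct product of its Sylow subgroups, residual finite-nilpotence of $G$ shows that every nontrivial element survives in a finite $p$-quotient for some prime $p$; hence $\bigcap_p R_p=1$ and $G$ embeds in $\prod_p G/R_p$. In particular $G$ is residually nilpotent, so $\bigcap_i\gamma_i(G)=1$, and if the lower central series does not stabilise then the factors $A_i=\gamma_i(G)/\gamma_{i+1}(G)$, which are finitely generated abelian since $G$ is finitely generated, are nontrivial for every $i$. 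Observing that $\gamma_{c+1}(G)\le R_p$ whenever $c\ge c_p$, the statement $\gamma_{c+1}(G)\le\bigcap_pR_p=1$ reduces the whole problem to establishing a uniform bound $\sup_p c_p<\infty$.

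Next I would isolate two detection principles relating the layers $A_i$ to the classes $c_p$. For torsion: if $A_i$ contains an element of $q$-power order, lift it to $a\in\gamma_i(G)$ and pass to $\bar G=G/\gamma_{i+1}(G)$, where $a$ is a nontrivial $q$-element; since $\bar G/R_p(\bar G)$ is residually finite-$p$ its torsion is a $p$-group, so $a\in R_p(\bar G)$ for $p\ne q$, and as $\bigcap_pR_p(\bar G)=1$ we get $a\notin R_q(\bar G)$. Then $\bar G/R_q(\bar G)$ is a residually finite-$q$ quotient of $G$, hence a quotient of $G/R_q$, nilpotent of class $\le c_q$, in which the image of $a$ is a nontrivial element of $\gamma_i$; this forces $c_q\ge i$. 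For the torsion-free part the same scheme is far stronger: if $A_i$ has positive rank, choose $a\in\gamma_i(G)$ of infinite order modulo $\gamma_{i+1}(G)$ and pass to the torsion-free finitely generated nilpotent group $\bar G_0=\bar G/\mathrm{tor}(\bar G)$, using Gruenberg's theorem (\cite{Rob}) that such groups are residually finite-$p$ for \emph{every} prime $p$. Then $a$ survives in $\bar G_0/R_p(\bar G_0)$, a quotient of $G/R_p$, for all $p$, so $c_p\ge i$ for every $p$. Consequently, if infinitely many layers $A_i$ had positive rank, then every $c_p$ would be infinite, a contradiction; so all but finitely many of the (nontrivial) layers must be finite torsion groups.

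The residual difficulty, which I expect to be the main obstacle, is precisely this torsion case: a priori the finite layers $A_i$ could involve a fresh prime $q(i)$ for each large $i$, so that the non-nilpotency of $G$ is spread diagonally over infinitely many primes and is invisible in any single $G/R_p$ — the torsion principle above only yields $c_{q(i)}\ge i$, never a bound on a fixed $c_p$. Ruling this out is exactly the content of Lemma 2.1 of \cite{w}, whose proof exploits finite generation to control, prime by prime, the pro-$p$ completion of $G$ and thereby delivers the uniform bound $\sup_p c_p<\infty$. Invoking it completes the argument: with $c=\sup_p c_p$ finite we obtain $\gamma_{c+1}(G)\le\bigcap_pR_p=1$, so $G$ is nilpotent. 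Thus the free-rank half of the argument is elementary, while the torsion half genuinely requires the Lie-theoretic uniformity packaged in Wilson's lemma.
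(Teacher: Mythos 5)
Your proposal is correct, but only in the sense that it lands exactly where the paper does: the paper's entire proof is the remark that this lemma is a straightforward corollary of Lemma 2.1 of \cite{w}, a statement essentially identical to the lemma itself, and your argument likewise defers its only hard step to that same citation. Your preliminary reductions ($\bigcap_p R_p=1$, the reduction to a uniform bound on the classes $c_p$, and the rank/torsion dichotomy via Gruenberg's theorem) are all valid but redundant, since the Wilson lemma you invoke for the residual ``diagonal torsion'' case already yields the full conclusion outright.
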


Now we will deal with Theorem A: {\it Let $m,n$ be positive integers, $v$ a multilinear commutator word and $w=v^m$. If $G$ is a residually finite group in which all $w$-values are $n$-Engel, then the verbal subgroup $w(G)$ is locally nilpotent.}

\begin{pro}\label{residual}
Let $G$ satisfy the hypothesis of Theorem A and assume additionally that $G$ is generated by finitely many Engel elements. Then $G$ is nilpotent.
\end{pro}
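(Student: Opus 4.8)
The plan is to reduce the statement to a pro-$p$ question and then feed the Engel condition on the $w$-values into Zelmanov's machinery. First I would observe that every finite quotient $\bar G$ of $G$ is nilpotent: such a $\bar G$ is generated by the images of the prescribed Engel elements, and by Baer's theorem the left Engel elements of a finite group constitute its Fitting subgroup, so $\bar G$ coincides with its own Fitting subgroup and is therefore nilpotent. Consequently $G$ is residually-(finite nilpotent), and by Lemma \ref{Wilson} it suffices to prove that $G/R_p$ is nilpotent for every prime $p$. Fixing $p$ and writing $H=G/R_p$, the group $H$ is a finitely generated, residually-(finite $p$-group) which again is generated by finitely many Engel elements and in which all $w$-values are $n$-Engel.

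For the core of the argument I would pass to the pro-$p$ completion $\hat H$, a finitely generated pro-$p$ group. Since all $w$-values are $n$-Engel, $\hat H$ satisfies the group law $[y,_n w]\equiv 1$, so by Lemma \ref{WZ} the associated Lie algebra $L=L(\hat H)$ over $\mathbb{F}_p$ is $PI$. The $n$-Engel relation on a $w$-value $a$ translates, via Lemma \ref{Laz}, into ad-nilpotency of the image $\tilde a$ of bounded index. To control the commutators, I would invoke Lemma \ref{delta}: if $k$ is the weight of $v$, every $\delta_k$-value is a $v$-value, and the set of $\delta_k$-values is normal and closed under commutation (a commutator of two $\delta_k$-values is again a $\delta_k$-value); hence the $m$-th power of any such commutator is once more a $w$-value and thus $n$-Engel. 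The aim is then to apply Zelmanov's theorem (Theorem \ref{Zelmanov}) to a finitely generated subalgebra generated by images of these values, concluding that it is nilpotent; Lazard's theorem (Theorem \ref{lazt}) would show the corresponding closed subgroup to be $p$-adic analytic. Combining $p$-adic analyticity with the law, which rules out nonabelian free subgroups, forces solubility, and since solubility passes to the dense subgroup $H\le\hat H$, the group $H$ is soluble.

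Finally, once $H$ is soluble it possesses a finite, hence ascending, normal series with abelian, in particular locally soluble, factors. As $H$ is generated by Engel elements, Proposition \ref{asc} identifies the subgroup they generate, namely all of $H$, with the Hirsch--Plotkin radical; thus $H$ is locally nilpotent and, being finitely generated, nilpotent. Lemma \ref{Wilson} then yields that $G$ itself is nilpotent.

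The main obstacle is the mismatch between the two hypotheses. The chosen generators of $G$ are merely Engel, not $n$-Engel, so their images need not be ad-nilpotent and the full Lie algebra $L_p(\hat H)$ cannot be shown nilpotent directly; the only ad-nilpotent elements available come from $w$-values. The delicate point is therefore to secure ad-nilpotency not just of finitely many $w$-values but of all the commutators among them that Zelmanov's theorem requires, which is where the normal commutator-closed set of $\delta_k$-values supplied by Lemma \ref{delta} is indispensable. Only after extracting nilpotency of the relevant verbal subgroup can one bootstrap, through the Engel generation and Proposition \ref{asc}, to the solubility and hence nilpotency of the whole group.
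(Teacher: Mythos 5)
Your reduction to the residually-$p$ case (Baer's theorem plus Lemma \ref{Wilson}) and your list of ingredients (Lemmas \ref{delta}, \ref{Laz}, \ref{WZ}, Theorems \ref{Zelmanov} and \ref{lazt}, Tits' alternative, Proposition \ref{asc}) are exactly those of the paper, but the core of your argument, as written, has a genuine gap---in fact two. You form $L=L(\hat H)$ for the pro-$p$ completion $\hat H$ of the \emph{whole} group $H=G/R_p$, apply Theorem \ref{Zelmanov} to the subalgebra of $L$ generated by the images of finitely many $\delta_k$-values, and then invoke Theorem \ref{lazt} for ``the corresponding closed subgroup'' $K$. But Theorem \ref{lazt} needs nilpotency of $L_p(K)$, the algebra built from the Zassenhaus--Jennings--Lazard series of $K$ itself, and that is not the subalgebra of $L(\hat H)$ you considered: for a subgroup one only has $D_i(K)\subseteq K\cap D_i(\hat H)$ (the inclusion is strict already for a subgroup of a cyclic $p^2$-group), so elements homogeneous with respect to $K$ need not be homogeneous with respect to $\hat H$, and nilpotency of your subalgebra does not transfer to $L_p(K)$. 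The paper sidesteps this by taking the abstract subgroup $H_0=\langle h_1,\dots,h_d\rangle$ generated by finitely many $\delta_k$-values, building $L_p(H_0)$ intrinsically---its generators $\tilde h_i$ and all Lie commutators in them are ad-nilpotent of index at most $mn$, by commutator-closedness of the $\delta_k$-values together with Lemma \ref{Laz}, and it is PI by Lemma \ref{WZ}---and only then passing to the pro-$p$ completion of $H_0$, for which $L_p(\hat H_0)=L_p(H_0)$.

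The second, more serious, gap is your conclusion that $H$ itself is soluble. As you concede in your closing paragraph, the generators of $H$ are merely Engel, so nothing in the argument makes $\hat H$ $p$-adic analytic; and solubility of the closed subgroup $K$ says nothing about $H$, which is dense in $\hat H$ and not contained in $K$. What the Lie-theoretic argument actually delivers is that every subgroup generated by finitely many $\delta_k$-values is soluble, i.e.\ that $H^{(k)}$ is \emph{locally} soluble (with $k$ as in Lemma \ref{delta}). The correct bootstrap---and the very reason Proposition \ref{asc} is stated for ascending series with locally soluble factors rather than just for soluble groups---is to use the series $1\le H^{(k)}\le H$, whose factors are locally soluble because $H/H^{(k)}$ is soluble; Proposition \ref{asc} then identifies the Engel elements of $H$ with its Hirsch--Plotkin radical, so $H$, being generated by finitely many Engel elements, is locally nilpotent, hence nilpotent, and Lemma \ref{Wilson} finishes the proof. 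Your final paragraph instead starts from the unproved solubility of $H$, a situation in which Gruenberg's theorem alone would suffice and Proposition \ref{asc} would be redundant; the whole difficulty of the proposition lies in the step you skipped.
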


\begin{proof}
Since finite groups generated by Engel elements are nilpotent \cite[12.3.7]{Rob}, it follows that $G$ is residually nilpotent. By Lemma \ref{Wilson}, we can assume that $G$ is residually-$p$ for some prime $p$ (i.e., for any non-trivial element $x\in G$ there exists a normal subgroup $N\leq G$ such that $x\notin N$ and $G/N$ is a finite $p$-group). Also, by Lemma \ref{delta}, there exists $k$ such that every $\delta_k$-value is a $v$-value in $G$. Choose arbitrarily finitely many $\delta_k$-values $h_1,\dots,h_d$ and set $H=\langle h_1,\dots,h_d\rangle$. Notice that if $h$ is an arbitrary commutator in $h_1,h_2,\dots,h_d$ with some system of brackets, then $h=\delta_k(x_1,\dots,x_{2^k})$ for suitably chosen $x_1,x_2,\dots,x_{2^k}\in G$.
Let $L=L_p(H)$ be the Lie algebra associated with the Zassenhaus-Jennings-Lazard series
$$H=D_1\geq D_2\geq \cdots$$
of $H$. Then $L$ is generated by $\tilde{h}_i=h_i D_2$, $i=1,2,\dots,d$. Let $\tilde{h}$ be any Lie-commutator in $\tilde{h}_i$ and $h$ be the group-commutator in $h_i$ having the same system of brackets as $\tilde{h}$. By Lemma \ref{Laz}, we have $(ad\,\tilde{h})^m=ad\,(\tilde{h^m})$. Since $h^m$ is $n$-Engel, $\tilde{h}$ is ad-nilpotent of index at most $mn$. Further, $H$ satisfies the identity
$$[y,_n \delta_k^m(x_1,\dots,x_{2^k})]\equiv 1$$
and therefore, by Lemma \ref{WZ}, $L$ satisfies some non-trivial polynomial identity. Now Theorem \ref{Zelmanov} implies that $L$ is nilpotent. Let $\hat{H}$ denote the pro-$p$ completion of $H$. Then $L_p(\hat{H})=L$ is nilpotent and $\hat{H}$ is a $p$-adic analytic by Theorem \ref{lazt}.
Hence, $H$ has a faithful linear representation over the field of $p$-adic numbers. Clearly $H$ cannot have a free subgroup of rank 2 and so, by Tits' Alternative \cite{tits}, $H$ has a soluble subgroup of finite index. It follows that $H$ is soluble (\cite[Exercise 9, p. 129]{DDMS}). Since $h_1,\dots,h_d$ have been chosen arbitrarily, we now conclude that $G^{(k)}$ is locally soluble. Thus, by Proposition \ref{asc}, $G$ is nilpotent.
\end{proof}

\begin{proof}[\bf Proof of Theorem A]
Let $H$ be a finitely generated subgroup of $w(G)$. Clearly, there exist finitely many $w$-values $w_1,\dots,w_d$ such that $H\leq\langle w_1,\dots,$ $w_d\rangle$. Set $W=\langle w_1,\dots,w_d \rangle$. Applying Proposition \ref{residual} to $W$, we obtain that $W$ is nilpotent and this yields that $w(G)$ is locally nilpotent.
\end{proof}

Our attempts to extend Theorem A to the class of locally graded groups so far have been successful only in the case where $w$ is a multilinear commutator, that is, $m=1$.

\begin{thmB}
Let $n$ be a positive integer and $w$ a multilinear commutator word. If $G$ is a locally graded group in which all $w$-values are $n$-Engel, then the verbal subgroup $w(G)$ is locally nilpotent.
\end{thmB}

\begin{proof}
By Lemma \ref{delta} there exists $k$ such that every $\delta_k$-value is a $w$-value. Denote by $X$ the set of all $\delta_k$-values in $G$. Choose finitely many $\delta_k$-values $h_1,\dots,h_d$ and set $H=\langle h_1,\dots,h_d\rangle$. Let $R$ be the intersection of all subgroups of $H$ with finite index and suppose $R\neq 1$. The quotient $H/R$ is residually finite and, by Proposition \ref{residual}, it is nilpotent. It follows that $H^{(s)}\leq R$ for some $s$. Now, $H/H^{(s+1)}$ is nilpotent, since this is a soluble group generated by finitely many Engel elements \cite[12.3.3]{Rob}. In particular, $H/H^{(s+1)}$ is residually finite. Hence $H^{(s)}=H^{(s+1)}$. On the other hand, $X\cap H$ is a normal commutator-closed subset of $H$ and therefore, by Corollary \ref{G^s}, $H^{(s)}$ is finitely generated. Since $R/H^{(s)}$ is a subgroup of the finitely generated nilpotent group $G/H^{(s)}$, we conclude that $R/H^{(s)}$ is finitely generated as well. Thus $R$ is finitely generated, say by $r$ elements. Since $G$ is locally graded, there exists a proper subgroup of $R$ with finite index $t$. By \cite[Theorem 7.2.9]{MHall} the number of subgroups of index $t$ in $R$ is finite and even bounded in terms of $r$ and $t$ only. This implies that the intersection $N$ of all subgroups of $R$ with index $t$ has finite index in $R$. Clearly, $N$ is characteristic and $R/N$ is finite. We deduce that $H/N$ satisfies the maximal condition. But $H/N$ is generated by finitely many Engel elements and so it is nilpotent \cite[12.3.7]{Rob}. Thus $H^{(s+1)}$ is a proper subgroup of $H^{(s)}$, a contradiction. This means that $H$ is residually finite and, by Proposition \ref{residual}, nilpotent. We have shown that every subgroup generated by finitely many $\delta_k$-values is nilpotent and so $G^{(k)}$ is locally nilpotent. By Proposition \ref{asc} $w(G)$ is locally nilpotent.
\end{proof}

Clearly, a quotient of a locally graded group need not be locally graded. We will now quote a useful result, due to Longobardi, Maj, Smith \cite{LMS}, that provides a sufficient condition for a quotient to be locally graded.

\begin{lem}\label{graded}
Let $G$ be a locally graded group and $N$ a normal locally nilpotent subgroup of $G$. Then $G/N$ is locally graded.
\end{lem}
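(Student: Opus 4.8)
The plan is to reduce immediately to finitely generated subgroups and then to isolate the only genuinely difficult configuration. Let $\bar K=K/N$ be a nontrivial finitely generated subgroup of $G/N$, say $\bar K=\langle \bar g_1,\dots,\bar g_r\rangle$ with $g_i\in G$, and put $H=\langle g_1,\dots,g_r\rangle$ and $M=H\cap N$. Then $M$ is normal in $H$ and locally nilpotent (being a subgroup of $N$), $H$ is finitely generated and locally graded (every finitely generated subgroup of $H$ is one of $G$), and $HM/M=\bar K\cong H/M$ with $H\neq M$. Thus it suffices to prove: \emph{a nontrivial finitely generated quotient $H/M$ of a finitely generated locally graded group by a normal locally nilpotent subgroup possesses a proper subgroup of finite index}, equivalently, there is a proper finite-index normal subgroup $W\trianglelefteq H$ with $M\leq W$.

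The next step is a dichotomy based on whether $M$ is supplemented by a proper finite-index subgroup. Since $H$ is finitely generated, locally graded and nontrivial, it has a proper finite-index normal subgroup $H_0$. If we can choose such an $H_0$ with $MH_0\neq H$, we are done: $MH_0$ is then a proper subgroup of $H$ of finite index (dividing $[H:H_0]$) containing $M$, so $MH_0/M$ is the desired proper finite-index subgroup of $H/M$. The remaining, hard, case is that $MH_0=H$ for \emph{every} finite-index normal subgroup $H_0$. In this situation $H/H_0=MH_0/H_0\cong M/(M\cap H_0)$ is a finite quotient of the locally nilpotent group $M$, hence nilpotent; so \emph{every} finite quotient of $H$ is nilpotent, and $M$ is dense in the profinite completion of $H$.

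The aim in the hard case is to show that these two features are incompatible with $M\neq H$. First, $H$ cannot be perfect: local gradedness provides a nontrivial finite quotient, which is now nilpotent and hence has a nontrivial abelianization, so $H'\neq H$; density of $M$ then forces $MH'=H$, since the finitely generated abelian group $H/H'$ has no nontrivial finite quotient beyond those already covered by $M$. More systematically, I would pass to $H/R$, where $R$ is the finite residual of $H$. Then $H/R$ is a finitely generated residually finite-nilpotent group, and I would apply Lemma \ref{Wilson} together with the Lie-theoretic nilpotency criteria of Section 3 (the pro-$p$ analysis behind Theorem \ref{lazt}) to each prime separately, with the goal of concluding that $H/R$ is actually nilpotent. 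Once $H/R$ is finitely generated nilpotent, the density of the image of $M$ forces $M R=H$ (for nilpotent groups a subgroup supplementing the derived subgroup is the whole group), whence $H/M\cong R/(R\cap M)$; iterating this collapse, or combining it with $R=H$ being excluded by local gradedness, should yield $M=H$, the desired contradiction.

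The main obstacle is precisely this hard case: controlling a finitely generated locally graded group all of whose finite quotients are nilpotent while it carries a \emph{proper} dense normal locally nilpotent subgroup. The delicate point is that finiteness of quotients alone cannot separate $M$ from $H$ here (by hypothesis $M$ meets every finite quotient), so the argument must genuinely exploit the interaction between local gradedness, the residual-finite-nilpotence forced on $H/R$, and the pro-$p$ nilpotency machinery, in order to upgrade ``all finite quotients nilpotent'' to ``$H/R$ nilpotent'' and thereby rule the configuration out.
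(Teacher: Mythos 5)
The paper does not prove this lemma at all: it is quoted from Longobardi--Maj--Smith \cite{LMS}, so your attempt has to be measured against what a complete proof actually requires, and it falls short. Your reduction to a finitely generated $H$ with $M=H\cap N$, and the ``easy case'' (some finite-index normal $H_0$ with $MH_0\neq H$), are correct but essentially formal. The ``hard case'' --- $MH_0=H$ for every finite-index normal $H_0$, i.e.\ $H/M$ nontrivial with no proper finite-index subgroup --- is the entire content of the lemma, and you leave it unresolved, as you yourself acknowledge. That is a genuine gap, not a detail.

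Moreover, the route you sketch for the hard case cannot be repaired as stated, for three concrete reasons. First, the tools you invoke are inapplicable: Theorem \ref{Zelmanov} needs ad-nilpotent generators and a polynomial identity, Lemma \ref{WZ} needs a group law, and Lemma \ref{Wilson} needs nilpotency of every pro-$p$ image as a \emph{hypothesis}; all of these come from the Engel condition on word values in the proof of Theorem A, and Lemma \ref{graded} has no Engel hypothesis whatsoever. Second, your intermediate goal --- upgrading ``all finite quotients of $H$ are nilpotent'' to ``$H/R$ is nilpotent'' --- is false in that generality: Grigorchuk's group is finitely generated, residually finite (hence locally graded), and all of its finite quotients are $2$-groups, yet it is not nilpotent. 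What a correct proof exploits is \emph{bounded} class: since $H/M$ is perfect and $H$ is finitely generated, one can write $H=H'F$ for a \emph{fixed} finitely generated (hence nilpotent, of some class $c$) subgroup $F\leq M$; an easy induction then gives $H=F\gamma_i(H)$ for all $i$, so every finite quotient of $H$ is an image of $F$ and has class at most $c$, whence $H/R$ has class at most $c$. You never fix such an $F$, so even this step is missing. Third, even granting $H/R$ nilpotent and $H=MR$, your endgame (``iterating this collapse \dots should yield $M=H$'') is not an argument: $R$ need not be finitely generated, so local gradedness gives no purchase on it, and the quotient $R/(R\cap M)\cong H/M$ is precisely the kind of quotient whose local gradedness is in question --- the iteration is circular and has no terminating parameter. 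Closing this last gap is where the real work of \cite{LMS} lies (it needs further substantive input, such as P.~Hall's theorem that finitely generated abelian-by-nilpotent groups are residually finite), and nothing in your proposal substitutes for it.
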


Remind the reader that, by Zelmanov's solution of the restricted Burnside problem, locally graded groups of finite exponent are locally finite (see for example \cite[Theorem 1]{Ma}).

\begin{thmC}
Let $n$ be a positive integer and $u$ a non-commutator word. If $G$ is a locally graded group in which all $u$-values are $n$-Engel, then the verbal subgroup $u(G)$ is locally nilpotent.
\end{thmC}

\begin{proof}
Let $u=u(x_1,\dots, x_r)$ be a non-commutator word. We may assume that the sum of the exponents of $x_1$ is $m\neq 0$. Substitute $1$ for $x_2,\dots,x_r$ and an arbitrary element $g\in G$ for $x_1$. We see that $g^m$ is a $u$-value for every $g\in G$. Thus every $m$th power is $n$-Engel in $G$. Let us show first that $G^m$ is locally nilpotent. Choose arbitrarily finitely many elements $g_1^m,\dots,g_d^m$ and set $H=\langle g_1^m,\dots,g_d^m\rangle$. We need to show that $H$ is nilpotent. Let $R$ be the intersection of all subgroups of $H$ with finite index and suppose $R\neq 1$. Then $H/R$ is residually finite and, by Proposition \ref{residual}, it is nilpotent. In particular, $H^{(s)}\leq R$ for some $s$. Moreover $H^{(s)}$ is finitely generated by Corollary \ref{G^s}. Arguing as in the proof of Theorem B, we get a contradiction. Thus $H$ is residually finite and, by Proposition \ref{residual}, nilpotent. Hence, $G^m$ is locally nilpotent. By Lemma \ref{graded}, it follows that $G/G^m$ is a locally graded group of finite exponent. Therefore $G/G^m$ is locally finite. This yields that $G$ is locally (nilpotent-by-finite). Finally, consider a subgroup $U$ of $G$ generated by finitely many $u$-values. Then $U$ has a nilpotent normal subgroup $N$ of finite index and, by the hypothesis, each $u$-value is Engel. So $U/N$ is nilpotent \cite[12.3.7]{Rob} and, consequently, $U$ is soluble. We deduce that $U$ is nilpotent \cite[12.3.3]{Rob} and $u(G)$ is locally nilpotent.
\end{proof}

\end{document}